\newtheorem{theorem}{Theorem}
\newtheorem{proposition}[theorem]{Proposition}
\newtheorem{corollary}[theorem]{Corollary}
\theoremstyle{definition}
\newtheorem{example}[theorem]{Example}
\newtheorem{remark}[theorem]{Remark}
\newcommand{\eps}{\epsilon}
\newcommand{\ul} [1]{\underline{#1}}
\newcommand{\wt} [1]{\smash{\widetilde{#1}}}
\newcommand{\bbC}{\mathbb{C}}
\newcommand{\bbI}{\mathbb{I}}
\newcommand{\bbJ}{\mathbb{J}}
\newcommand{\bbP}{\mathbb{P}}
\newcommand{\bbR}{\mathbb{R}}
\newcommand{\bbV}{\mathbb{V}}
\newcommand{\mbc}{\mathbf{c}}
\newcommand{\mbD}{\mathbf{D}}
\newcommand{\mbE}{\mathbf{E}}
\newcommand{\mbQ}{\mathbf{Q}}
\newcommand{\mbX}{\mathbf{X}}
\newcommand{\mcA}{\mathcal{A}}
\newcommand{\mcD}{\mbD}
\newcommand{\mcE}{\mathcal{E}}
\newcommand{\mcG}{\mathcal{G}}
\newcommand{\mcL}{\mathcal{L}}
\newcommand{\mcV}{\mathcal{V}}
\newcommand{\mfg}  {\mathfrak{g}}
\newcommand{\mfgl} {\mathfrak{gl}}
\newcommand{\mfso} {\mathfrak{so}}
\newcommand{\mfp}  {\mathfrak{p}}
\newcommand{\sfP}{\mathsf{P}}
\DeclareMathOperator{\csch} {csch}
\DeclareMathOperator{\End} {End}
\DeclareMathOperator{\G}   {G}
\DeclareMathOperator{\GL}  {GL}
\DeclareMathOperator{\gr}  {gr}
\DeclareMathOperator{\Hol} {Hol}
\DeclareMathOperator{\id}  {id}
\DeclareMathOperator{\im}  {im}
\DeclareMathOperator{\SL}  {SL}
\DeclareMathOperator{\SO}  {SO}
\DeclareMathOperator{\Spin}{Spin}
\DeclareMathOperator{\SU}  {SU}
\DeclareMathOperator{\Sym} {Sym}
\DeclareMathOperator{\tr}  {tr}
\newcommand{\fplusl}
{
\hspace{0.1cm}
\begin{tikzpicture}[baseline=-0.582ex]
    \draw [line width=0.24pt](-0.1129, 0) -- (0.1129, 0) -- (0, 0) -- (0, -0.1129) -- (0, 0.1129) arc (90:270:0.1129) -- (0, 0);
\end{tikzpicture}
\hspace{0.1cm}
}
\begin{document}

\title[The almost Einstein operator for $(2, 3, 5)$ distributions]{The almost Einstein operator\\ for $(2, 3, 5)$ distributions}
\author{Katja Sagerschnig and Travis Willse}

\address{
K.S.:
	Dipartimento di Scienze Matematiche\\ Politecnico di Torino\\ Corso Duca degli Abruzzi 24\\ 10129 Torino\\ ITALY  \\
T.W.:
	Fakult\"{a}t f\"{u}r Mathematik\\
	Universit\"{a}t Wien\\
	Oskar-Morgenstern-Platz 1\\
	1090 Wien\\
	AUSTRIA}

\email{katja.sagerschnig@univie.ac.at}
\email{travis.willse@univie.ac.at}

\subjclass[2010]{Primary 53A30, 53A40, 53C25, 58A30, 58J60}
\keywords{$(2, 3, 5)$-distributions, almost Einstein, BGG operators, conformal geometry, invariant differential operators}

\maketitle

\begin{abstract}
For the geometry of oriented $(2, 3, 5)$ distributions $(M, \mbD)$, which correspond to regular, normal parabolic geometries of type $(\G_2, P)$ for a particular parabolic subgroup $P < \G_2$, we develop the corresponding tractor calculus and use it to analyze the first BGG operator $\Theta_0$ associated to the $7$-dimensional irreducible representation of $\G_2$. We give an explicit formula for the normal connection on the corresponding tractor bundle and use it to derive explicit expressions for this operator. We also show that solutions of this operator are automatically normal, yielding a geometric interpretation of $\ker \Theta_0$: For any $(M, \mbD)$, this kernel consists precisely of the almost Einstein scales of the Nurowski conformal structure on $M$ that $\mbD$ determines.

We apply our formula for $\Theta_0$ (1) to recover efficiently some known solutions, (2) to construct a distribution with root type $[3, 1]$ with a nonzero solution, and (3) to show efficiently that the conformal holonomy of a particular $(2, 3, 5)$ conformal structure is equal to $\G_2$.
\end{abstract}

\tableofcontents

\section{Introduction}
A \textit{$(2, 3, 5)$ distribution} on a $5$-manifold $M$ is a $2$-plane distribution $\mbD\subset TM$  that is maximally nonintegrable in the sense that (1) $[\mbD, \mbD]$ is a $3$-plane distribution, and (2) $[\mbD, [\mbD, \mbD]] = TM$. There is an equivalence of categories between oriented $(2, 3, 5)$ distributions and regular, normal parabolic geometries of type $(\G_2, P),$ where $\G_2$ is the automorphism group of the algebra of split octonions and $P < \G_2$ is the maximal parabolic subgroup defined as the stabiliser of an isotropic ray in the $7$-dimensional fundamental representation $\mathbb{V}$ of $\G_2$ \cite[\S4.3.2]{CapSlovak}. (Here, \textit{oriented} just means that the bundle $\mbD \to M$ is oriented, or equivalently that $M$ itself is.) This correspondence is essentially established in Cartan's influential 1910 ``Five Variables Paper'' \cite{CartanFiveVariables}, where it is the output of Cartan's most involved application of his method of equivalence, and it has recently been exploited to give new insights into this geometry, often via the powerful general framework of parabolic geometry \cite{CapSagerschnig, HammerlSagerschnig, Nurowski, SagerschnigWillse}.

Besides this equivalence, for our purposes the most important feature of the geometry of $(2, 3, 5)$ distributions is its intimate connection to conformal geometry: Nurowski observed, implicitly by exploiting the inclusion $\G_2 \hookrightarrow \SO(3, 4)$, that any $(2, 3, 5)$ distribution $(M, \mbD)$ canonically induces a conformal structure $\mbc_{\mbD}$ of signature $(2, 3)$ on $M$ \cite{Nurowski}, a construction that has led to many results of independent interest over the last decade \cite{GPW, GrahamWillse, LeistnerNurowski}. We call a conformal structure that arises this way a \textit{$(2, 3, 5)$ conformal structure}, and in the oriented case these are characterised among all conformal structures of signature $(2, 3)$ precisely by the containment of the holonomy of the normal conformal connection in $\G_2$ \cite{HammerlSagerschnig, Nurowski}.

One of the motivations for this article is to establish an efficient calculus to determine whether a given $(2,3,5)$ conformal structure $(M, \mbc_{\mbD})$ contains an Einstein metric. If it does---or more precisely if it admits a nonzero almost Einstein scale (see below)---it inherits a rich system of additional structure and relationships with other geometric structures, as described in detail in \cite{SagerschnigWillse}.

For a general conformal structure $(M, \mbc)$ of signature $(p, q)$, $p + q \geq 3$, whether $\mbc$ contains an Einstein metric is governed by the conformally invariant, overdetermined, linear differential \textit{almost Einstein operator} \cite{BEG},
\begin{equation}\label{equation:BGG-conformal}
	\wt\Theta_0: \Gamma(\mcE[1]) \to {\textstyle \Gamma(\bigodot^2_{\circ} T^*M [1])},
		\qquad
	\wt\Theta_0(\sigma) := (\wt\nabla_a \wt\nabla_b \sigma + \wt\sfP_{ab}\sigma)_{\circ} .
\end{equation}
Here and henceforth, for a vector bundle $B \to M$, we denote $B[w] := B \otimes \mcE[w]$, where $\mcE[w] \to M$ is the bundle of conformal densities of weight $w$; $\wt\nabla$ denotes the Levi-Civita connection and $\wt\sfP$ the conformal Schouten tensor of a representative metric.   A nowhere-vanishing solution $\sigma \in \ker \wt\Theta_0$ determines an Einstein metric $\sigma^{-2}\mathbf{g}\in\mathbf{c}$, where $\smash{\mathbf{g}\in\Gamma(\bigodot^2 T^*M [2])}$ is the canonical weighted metric defined by the conformal structure $\mathbf{c}$, and vice versa. This motivates the term \textit{almost Einstein scale} for a solution $\sigma \in \Gamma(\mcE[1])$ (possibly with nonempty zero locus) of the differential equation $\wt\Theta_0(\sigma) = 0$. We say that $\mbc$ is itself \textit{almost Einstein} iff it admits a nonzero almost Einstein scale.

Writing $\wt\Theta_0(\sigma) = 0$ as a first-order system and prolonging once yields a closed system and hence determines a natural vector bundle $\mcV \to M$, called the \textit{conformal tractor bundle}, equipped with a conformally invariant \textit{normal tractor connection} $\nabla^{\mcV}$. In particular, the prolongation map $\wt L_0 : \Gamma(\mcE[1]) \to \Gamma(\mcV)$ and the projection $\wt\Pi_0 : \Gamma(\mcV) \to \Gamma(\mcE[1])$ onto the first component together define a bijective correspondence between the space $\ker \wt\Theta_0$ of almost Einstein scales and $\nabla^{\mcV}$-parallel sections of the bundle $\mcV$. If $(\wt\mcG, \wt\omega)$ is the normal conformal Cartan geometry of type $(\SO(p + 1, q + 1), \wt P)$ canonically associated to $(M, \mbc)$, then we can identify $\mcV$ with the associated bundle $\wt\mcG \times_{\wt P} \bbV$, where $\bbV \cong \bbR^{p + 1, q + 1}$ is the standard representation of $\SO(p + 1, q + 1)$, and under this identification $\nabla^{\mcV}$ is the connection induced thereon by $\wt\omega$ \cite{CapSlovak}; here, $\wt P < \SO(p + 1, q + 1)$ is the stabiliser of an isotropic ray in $\bbV$, and again it is a parabolic subgroup. This in turn realises $\wt\Theta_0$ as the \textit{first BGG operator} for conformal geometry corresponding to the representation $\bbV$ \cite{CalderbankDiemer, CSS}.

Given a $(2, 3, 5)$ conformal structure, we may identify the tractor bundle $\mcV$ associated to the standard representation $\bbV$ of $\G_2$ (which is just the restriction to $\G_2$ of the standard representation of $\SO(3, 4)$ of the same name) with that of the induced conformal structure, and the normal tractor connections $\nabla^{\mcV}$ coincide \cite[Proposition 4]{HammerlSagerschnig}. In particular, if $\sigma$ is an almost Einstein scale for the conformal structure, that is, $\sigma \in \mcE[1]$ is in the kernel of the almost Einstein operator $\wt\Theta_0$, then it is in the kernel of the BGG operator $\Theta_0$ for $(2, 3, 5)$ geometry associated to $\bbV$ (now regarded as a representation of $\G_2 \subset \SO(3, 4)$), which we compute explicitly in Section \ref{section:BGG-operator}. Conversely, if (and only if) a solution $\sigma \in \ker \Theta_0$ is \textit{normal}---that is, it is the projection $\Pi_0(t)$ of a $\nabla^{\mcV}$-parallel section $t \in \Gamma(\mcV)$---then it is an almost Einstein scale for the induced conformal structure $\mbc_{\mbD}$.

A priori it is conceivable that solutions of $\Theta_0$ need not always be normal, that is, that there are distributions for which the containment $\ker \smash{\wt\Theta_0} \subseteq \ker \Theta_0$ is proper: Indeed, we may view the conformal BGG equation $\wt\Theta_0(\sigma) = 0$ as an overdetermined system of $\dim \smash{\bigodot^2_{\circ} T^*M[1]} = 14$ partial differential equations in $\sigma$. By contrast, applying Kostant's Theorem \cite{CapSlovak} shows that $\Theta_0$ is an operator $\smash{\Gamma(\mcE [1])} \to \smash{\Gamma(\bigodot^2 \mbD^* [1])}$,\footnote{In fact, applying the theorem gives that $\Theta_0$ is an operator $\smash{\Gamma(\bigwedge^2 \mbD)} \to \smash{\Gamma(\bigodot^2 \mbD^* \otimes \bigwedge^2 \mbD)}$, but we establish in Section \ref{section:conformal-structure} the natural isomorphism $\smash{\bigwedge^2 \mbD} \cong \mcE[1]$.} so $\Theta_0(\sigma) = 0$ is an overdetermined system of just $\dim \smash{\bigodot^2 \mbD^*[1]} = 3$ p.d.e.s in $\sigma$, and thus it appears potentially weaker.

In fact the latter system is not weaker: We show that the normal Cartan connection $\omega$ satisfies a normalization condition that guarantees the induced connection $\nabla^{\mcV}$ is the preferred prolongation connection for $\Theta_0$ in the sense of \cite[Theorems 1.2, 1.3]{HSSS} and hence prove the following result.
\begin{theorem}\label{theorem:normality-of-solutions}
For any oriented $(2, 3, 5)$-distribution $\mbD$, all solutions of the BGG equation $\Theta_0(\sigma)=0$ are normal; equivalently, $\ker \Theta_0 = \ker \wt\Theta_0$.
\end{theorem}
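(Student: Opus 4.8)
The plan is to realise both $\Theta_0$ and its prolongation entirely inside the parabolic geometry of type $(\G_2, P)$ and to invoke the general prolongation theory of \cite{HSSS}. By construction, any $\nabla^{\mcV}$-parallel section $t \in \Gamma(\mcV)$ projects to a solution $\Pi_0(t) \in \ker \Theta_0$ and is recovered from it as $t = L_0(\Pi_0(t))$, so the content of the theorem is the converse: that the splitting $L_0(\sigma)$ of \emph{every} $\sigma \in \ker \Theta_0$ is already $\nabla^{\mcV}$-parallel. Equivalently, the normal tractor connection $\nabla^{\mcV}$ must coincide with the canonical (preferred) prolongation connection $\nabla^{\mathrm{pro}}$ whose parallel sections are, by \cite[Thm 1.2, 1.3]{HSSS}, in bijection via $L_0$ and $\Pi_0$ with \emph{all} of $\ker \Theta_0$. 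Since the normal solutions are precisely $\ker \wt\Theta_0$ (by the identification of the $\G_2$- and conformal tractor connections in \cite[Prop 4]{HammerlSagerschnig}), and since $\ker \wt\Theta_0 \subseteq \ker \Theta_0$ has already been noted, establishing $\nabla^{\mathrm{pro}} = \nabla^{\mcV}$ yields $\ker \Theta_0 = \ker \wt\Theta_0$.

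First I would recall the explicit shape of $\nabla^{\mathrm{pro}}$ from \cite{HSSS}: it is obtained from $\nabla^{\mcV}$ by adding a zeroth-order correction $\Phi \in \Gamma(T^*M \ot \End \mcV)$ built algebraically from the harmonic curvature $\kappa_H$ of $\mbD$, its action on $\mcV$, the Kostant codifferential $\partial^*$, and the partial inverse $\square^{-1}$ of the Kostant Laplacian. Theorems 1.2 and 1.3 of \cite{HSSS} then guarantee $\nabla^{\mathrm{pro}} = \nabla^{\mcV}$ as soon as the relevant normalization condition holds, namely that this curvature-built correction vanishes, $\Phi \equiv 0$. This reduces the theorem to a purely algebraic, representation-theoretic verification, for which the explicit normal connection computed in Section \ref{section:BGG-operator} and the known $P$-module structure of $\mcV$ are exactly the required inputs. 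It is favourable here that, for $(2, 3, 5)$ geometry, regularity and normality force the entire Cartan curvature to be encoded by a \emph{single} irreducible harmonic component $\kappa_H$ (the Cartan quartic), so that only one curvature term can enter $\Phi$.

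The heart of the argument is therefore to check $\Phi \equiv 0$ for the $7$-dimensional representation $\bbV$. Using that $\mcH_0 \cong \mcE[1]$ and $\mcH_1 \cong \bigodot^2 \mbD^*[1]$, I would decompose the composite map defining $\Phi$ into its $\G_2$-isotypic pieces and show that each is forced to vanish: the relevant part of $\kappa_H \ot \mcV$, after applying $\partial^*$ and $\square^{-1}$, has no image in the target $P$-module. This can be verified either by a weight/highest-weight count on the finite-dimensional $\mfg_2$-modules involved, or—matching the explicit spirit of the paper—by substituting the formula for $\nabla^{\mcV}$ from Section \ref{section:BGG-operator} and carrying out the (finite) prolongation of $\Theta_0(\sigma) = 0$ by hand, confirming that the successive slot equations close up with no curvature obstruction beyond $\Theta_0(\sigma) = 0$ itself.

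I expect the main obstacle to be precisely this last verification: correctly matching the abstract correction term of \cite{HSSS} against the explicit tractor data, keeping careful track of the homogeneity (grading) components of $\mcV$ and of the single curvature component, and confirming that the would-be obstruction lands in a $P$-submodule on which the relevant projection is zero. Once $\Phi \equiv 0$ is established, \cite[Thm 1.2, 1.3]{HSSS} give $\nabla^{\mathrm{pro}} = \nabla^{\mcV}$, so every $\sigma \in \ker \Theta_0$ satisfies $\nabla^{\mcV} L_0(\sigma) = 0$ and is hence normal; combined with \cite[Prop 4]{HammerlSagerschnig} and the inclusion $\ker \wt\Theta_0 \subseteq \ker \Theta_0$, this proves $\ker \Theta_0 = \ker \wt\Theta_0$.
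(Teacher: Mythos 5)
Your overall strategy is the same as the paper's: reduce the theorem to the normalization condition of \cite[Theorems 1.2, 1.3]{HSSS}, which guarantees that the normal tractor connection $\nabla^{\mcV}$ already coincides with the canonical prolongation connection, and then verify that condition by a purely algebraic computation involving the harmonic curvature. However, two steps in your plan are incomplete as written. First, the HSSS condition is $(\partial^*_{\mcV}\otimes\id_{\mcV^*})(\Omega)=0$ for the \emph{full} curvature $\Omega$ of $\nabla^{\mcV}$, whereas your verification is set up only for the single harmonic component $\kappa_H$; your remark that only one curvature term can enter the correction does not by itself bridge this. Passing from ``the harmonic part takes values in the good $P$-submodule'' to ``the full curvature function does'' is a genuine step: the paper uses the torsion-freeness of regular, normal geometries of type $(\G_2,P)$ together with \cite[Corollary 3.2]{CapCorr} to make exactly this transition, and your argument needs an equivalent ingredient.

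Second, the central algebraic verification is only described, not performed, and it is where all the content lies. The paper's execution is concrete and short, and you should carry out something equivalent: Kostant's theorem identifies the harmonic curvature with a tensor $A_{\alpha\beta}{}^{\gamma}{}_{\zeta}$ that is totally symmetric in $\alpha,\beta,\zeta$ and trace-free, $A_{\alpha\beta}{}^{\gamma}{}_{\gamma}=0$ (essentially the binary quartic on $\mbD$). Its action on each graded slot of $\bbV=\bbV_{-2}\oplus\bbV_{-1}\oplus\bbV_0\oplus\bbV_1\oplus\bbV_2$ is then inspected directly: on the one-dimensional slots $\bbV_{\pm2},\bbV_0$ it acts by a multiple of the trace and hence vanishes, while on the two-dimensional slots $\bbV_{\pm1}$ the total symmetry of $A$ forces $\partial^*_{\bbV}$ of the resulting element to vanish. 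With these two points supplied---the reduction to the harmonic curvature via torsion-freeness and \cite{CapCorr}, and the explicit symmetry/trace check on each slot of $\bbV$---your argument becomes the paper's proof; neither step requires the isotypic decomposition or the explicit formula for $\nabla^{\mcV}$ from Section \ref{section:BGG-operator} that you propose to use.
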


We give our proof immediately; it relies only on algebraic aspects of the general theory of parabolic geometry (see the standard reference for \cite[\S3]{CapSlovak}, which defines the standard notation used in the proof), and in particular it does not use any explicit formula for $\Theta_0$.

\begin{proof}[Proof of Theorem \ref{theorem:normality-of-solutions}]
It suffices to show that for any regular, normal Cartan geometry $(\mcG \to M, \omega)$ of type $(\G_2, P)$ the curvature $\Omega\in\Omega^2(M,\mathcal{A})$ of the normal tractor connection $\nabla^{\mathcal{V}}$ on the standard tractor bundle $\mathcal{V}=\mathcal{G}\times_{P}\mathbb{V}$ satisfies $(\partial^*_{\mathcal{V}}\otimes\id_{\mathcal{V}^*})(\Omega)=0$, equivalently $\partial^*_{\mathcal{V}}(\Omega(t))=0$ for all $t\in\Gamma(\mathcal{\mcV})$: It then follows from \cite[Theorem 1.2]{HSSS} that $\nabla^{\mcV}$ coincides with the canonical prolongation connection for the corresponding BGG equation, and by \cite[Theorem 1.3]{HSSS} this means that all solutions are normal. (Here $\partial^*_{\mcV} : T^*M \otimes \mcV \to \mcV$ is the bundle map induced by the Kostant codifferential associated to $\bbV$, and $\mathcal A$ denotes the adjoint tractor bundle $\mcG \times_P \mfg_2$.)

The bundle map $\partial^*_{\mathcal{V}}\otimes\id_{\mathcal{V}^*}$ is induced from a $P$-equivariant map whose kernel describes a $P$-submodule $\mathbb{W}\subset\smash{\bigwedge^2\mathfrak{g}_{-}^*\otimes \End(\mathbb{V})}$. Let $\mathbb{E}:=\mathbb{W}\cap \ker \,\partial^*_{\mathfrak{g}}\subset \smash{\bigwedge^2\mathfrak{g}_{-}^*\otimes \mathfrak{g}}$ be the intersection of $\mathbb{W}$ with the kernel of $\partial^*_{\mathfrak{g}}$. Then, since regular, normal parabolic geometries associated with $(2,3,5)$ distributions are torsion-free,  \cite[Corollary 3.2]{CapCorr}  shows that if the harmonic part $A$ of the curvature function takes values in $\mathbb{E}\cap \ker \square$, then  the full curvature function $\kappa$ takes values in $\mathbb{E}$. 

Kostant's Theorem \cite[\S3.3]{CapSlovak} identifies the harmonic curvature module $\ker \square \subset (\mathfrak{g}_{-1})^*\otimes(\mathfrak{g}_{-3})^*\otimes\mathfrak{g}_0$. The Lie bracket defines an isomorphism $\mathfrak{g}_{-3}\cong\mathfrak{g}_{-1}\otimes\mathfrak{g}_{-2}$ and an inclusion $\mathfrak{g}_0\hookrightarrow (\mathfrak{g}_{-1})^*\otimes\mathfrak{g}_{-1}$.
If we apply  these identifications and view  $\ker \square$ as a subspace of $$(\mathfrak{g}_{-1}^*\otimes\mathfrak{g}_{-1}^*\otimes\mathfrak{g}_{-1}\otimes\mathfrak{g}_{-1}^*)\otimes\mathfrak{g}_2,$$ then Kostant's theorem shows that an element $A_{\alpha \beta}{}^{\gamma}{}_{\zeta}\in\ker \square$ is symmetric in $\alpha, \beta, \zeta$ and any trace (hence all traces) vanish:
$A_{\alpha \beta}{}^{\gamma}{}_{\gamma}=0$.

Now consider the action of  $A_{\alpha \beta}{}^{\gamma}{}_{\zeta}$ on $\mathbb{V}=\mathbb{V}_{-2}\oplus \mathbb{V}_{-1}\oplus\mathbb{V}_0\oplus\mathbb{V}_{1}\oplus\mathbb{V}_{2}$. On any of the $1$-dimensional  $G_0$-representations, $\mathbb{V}_2, \mathbb{V}_0,$ and $\mathbb{V}_{-2}$, the action is, up to multiplication with constants, given by $\lambda \mapsto A_{\alpha \beta}{}^{\gamma}{}_{\gamma} \lambda$ and thus trivial. 
On the $2$-dimensional $G_0$-representation $\mathbb{V}_1$
the action is given by $\phi^{\zeta} \mapsto A_{\alpha \beta}{}^{\gamma}{}_{\zeta}\phi^{\zeta}$ and on $\mathbb{V}_{-1}$ by $\tau_{\gamma} \mapsto -A_{\alpha \beta}{}^{\gamma}{}_{\zeta} \tau_{\gamma}$. Again using the symmetry properties 
of $A_{\alpha \beta}{}^{\gamma}{}_{\zeta},$ one sees that the Kostant codifferential $\partial_{\mathbb{V}}^*$ applied to these elements vanishes.
\end{proof}

This theorem has several consequences. Foremost, by the observations before the theorem this establishes that $\ker \Theta_0 = \ker \wt\Theta_0$, which in turn furnishes a concrete geometric interpretation of $\Theta_0$.
\begin{theorem}\label{theorem:geometric-characterization}
Let $(M, \mbD)$ be an oriented $(2, 3, 5)$ distribution. A section $\sigma \in \Gamma(\mcE[1])$ is a solution of the BGG equation $\Theta_0(\sigma) = 0$ iff it is an almost Einstein scale of the induced conformal structure $\mbc_{\mbD}$.
\end{theorem}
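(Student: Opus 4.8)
The plan is to read off this theorem as an immediate corollary of Theorem \ref{theorem:normality-of-solutions}, so the proof should be short. The two directions are already isolated in the discussion preceding that theorem, and it remains only to combine them. For the forward direction, the inclusion $\ker \wt\Theta_0 \subseteq \ker \Theta_0$ was established there: any almost Einstein scale $\sigma \in \ker \wt\Theta_0$ lies in $\ker \Theta_0$, using that the conformal and $\G_2$ tractor bundles and their normal connections coincide \cite[Proposition 4]{HammerlSagerschnig}, so that the projection $\wt\Pi_0(t)$ of a parallel conformal tractor $t$ equals the projection $\Pi_0(t)$ of the same section viewed as a parallel $\G_2$ tractor.

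For the converse, I would start from an arbitrary $\sigma \in \ker \Theta_0$ and apply Theorem \ref{theorem:normality-of-solutions} to conclude that $\sigma$ is normal, i.e.\ $\sigma = \Pi_0(t)$ for some $\nabla^{\mcV}$-parallel section $t$ of the $\G_2$ tractor bundle. Invoking the same coincidence of tractor data, $t$ is also parallel for the conformal tractor connection, whence $\sigma = \wt\Pi_0(t)$ is the projection of a parallel conformal tractor and is therefore an almost Einstein scale of $\mbc_{\mbD}$. The two directions together yield the asserted equivalence $\ker \Theta_0 = \ker \wt\Theta_0$.

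I expect no substantive obstacle: all the genuine difficulty---ruling out the a priori possibility that $\ker \wt\Theta_0 \subsetneq \ker \Theta_0$---is already absorbed into Theorem \ref{theorem:normality-of-solutions}, and what remains is only to thread the standard BGG correspondence between parallel tractors and solutions of the first BGG operator through the identification of the two tractor calculi. The single point warranting a line of justification is that the two projection operators $\Pi_0$ and $\wt\Pi_0$ agree on the common tractor bundle, both being the projection onto the top slot $\mcE[1]$; once this is noted the chain of equivalences closes.
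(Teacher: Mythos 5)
Your proposal is correct and follows exactly the paper's own route: the paper presents Theorem \ref{theorem:geometric-characterization} as an immediate consequence of Theorem \ref{theorem:normality-of-solutions} together with the observations in the introduction that the $\G_2$ and conformal standard tractor bundles and their normal connections coincide, so that $\ker\wt\Theta_0\subseteq\ker\Theta_0$ always holds and normality of solutions gives the reverse inclusion. Your added remark that $\Pi_0$ and $\wt\Pi_0$ agree as the projection onto the top slot $\mcE[1]$ is the right (and only) point needing a word of justification.
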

We thus call $\Theta_0$ the \textit{$(2, 3, 5)$ almost Einstein operator}, we call any solution of $\Theta_0(\sigma) = 0$ an \textit{almost Einstein scale} for the distribution $\mbD$, and we say that $\mbD$ is \textit{almost Einstein} iff that equation admits a nonzero solution: Theorem \ref{theorem:geometric-characterization} asserts that $\sigma$ is an almost Einstein scale for $\mbD$ iff it is an almost Einstein scale for $\mbc_{\mbD}$ and hence that $\mbD$ is almost Einstein iff $\mbc_{\mbD}$ is.

In turn, the characterization in Theorem \ref{theorem:geometric-characterization} shows that the problem of existence of Einstein representatives in the induced conformal class is a basic feature of the geometry of oriented $(2, 3, 5)$ distributions, and that it can be apprehended directly, that is, without explicit reference to the induced conformal structure.

Every solution of $\Theta_0(\sigma) = 0$ is the projection $\Pi_0(t)$ of a $\nabla^{\mcV}$-parallel section $t \in \Gamma(\mcV)$, and so a nonzero solution determines a reduction of the holonomy of $\nabla^{\mcV}$. Translating from \cite[Proposition A]{SagerschnigWillse} and the following text immediately gives the following.

\begin{proposition}
If $\sigma$ is a nonzero solution of the BGG equation $\Theta_0(\sigma) = 0$, then the holonomy $\Hol(\nabla^{\mcV})$ admits a reduction to $\SU(1, 2)$, $\SL(2, \bbR) \ltimes P_+$, or $\SL(3, \bbR)$ when the corresponding parallel tractor is, respectively, spacelike, isotropic, or timelike. Here, $SL(2, \bbR) \ltimes P_+ < P$ is the subgroup of $G_2$ preserving an isotropic vector in the standard representation $\bbV$.
\end{proposition}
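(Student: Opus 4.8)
The plan is to convert the differential condition $\Theta_0(\sigma) = 0$ into a fixed-vector condition for the holonomy group and then read off the reduction from the algebraic classification of vector stabilisers in $\G_2$. I would not compute anything with the explicit formula for $\Theta_0$; the argument is purely a matter of transporting the parallel tractor into the orbit picture for the $\G_2$-action on $\bbV$.

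First I would invoke Theorem \ref{theorem:normality-of-solutions} together with the bijection recalled just above the proposition: a nonzero solution $\sigma$ is the projection $\Pi_0(t)$ of a $\nabla^{\mcV}$-parallel section $t \in \Gamma(\mcV)$, and $t$ is nowhere vanishing, since a parallel section that vanishes at one point vanishes identically while $\Pi_0(t) = \sigma \neq 0$. A nonzero parallel section is preserved by parallel transport, so upon fixing a base point $x \in M$ and an identification $\mcV_x \cong \bbV$ of the fibre with the standard representation, the holonomy group $\Hol(\nabla^{\mcV}) \subseteq \G_2$ fixes the nonzero vector $v := t(x) \in \bbV$. Hence $\Hol(\nabla^{\mcV})$ is contained in the stabiliser $\mathrm{Stab}_{\G_2}(v)$, which is exactly the statement that the holonomy admits a reduction to that stabiliser.

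Next I would check that the causal type of $v$ is a genuine invariant of the solution and is independent of $x$. The $\G_2$-invariant, nondegenerate bilinear form of signature $(3, 4)$ on $\bbV$ is the fibre model of the parallel tractor metric $h$ on $\mcV$; since $h$ and $t$ are both parallel, $h(t, t)$ is a parallel and hence (on connected $M$) constant function, whose sign classifies $v$ as spacelike, isotropic, or timelike once and for all. Thus exactly one of the three cases occurs, globally.

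The remaining and only substantial point is the algebraic identification of $\mathrm{Stab}_{\G_2}(v)$ by causal type. For the split group $\G_2$ acting on $\bbV \cong \bbR^{3, 4}$, the nonzero vectors fall into precisely three orbits---the spacelike and timelike level sets of the form and the punctured null cone---and on each orbit the stabiliser is conjugate to a fixed subgroup; the orthogonal complements $v^\perp$, of respective signatures $(2, 4)$ and $(3, 3)$ in the non-null cases and degenerate in the null case, recover $\SU(1, 2)$, $\SL(3, \bbR)$, and the parabolic-type $\SL(2, \bbR) \ltimes P_+$ in the spacelike, timelike, and isotropic cases. This orbit-and-stabiliser analysis is precisely \cite[Proposition A]{SagerschnigWillse} and the discussion following it, so I would simply cite it; combined with the containment from the first step it yields the three claimed reductions. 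The main obstacle is therefore entirely absorbed into that classification, which is why, granted the reference, the result follows immediately.
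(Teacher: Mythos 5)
Your proposal is correct and follows essentially the same route as the paper: the paper's entire argument is the observation that a nonzero solution corresponds (via normality) to a nowhere-vanishing parallel standard tractor, whose pointwise stabiliser in $\G_2$ contains the holonomy group, followed by a citation of the orbit--stabiliser classification in \cite[Proposition A]{SagerschnigWillse}. You have merely made explicit the standard intermediate steps (nonvanishing of the parallel section, constancy of the causal type via the parallel tractor metric) that the paper leaves implicit.
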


When studying this problem of the existence of almost Einstein scales in a $(2, 3, 5)$ conformal structure, working with the operator $\Theta_0$ instead of $\wt\Theta_0$ has two practical advantages: Firstly, as we observed above, the system $\Theta_0(\sigma) = 0$ consists of many fewer equations than $\wt\Theta_0(\sigma) = 0$ does. The operator $\Theta_0$ is also lower-order in $\mbD$: It depends on just $4$ derivatives of a distribution $\mbD$ (see \eqref{equation:DQQ}-\eqref{equation:DXX}), whereas the construction $\mbD \rightsquigarrow \mbc_{\mbD} \rightsquigarrow \wt\Theta_0$ depends on $6$ derivatives of $\mbD$. Secondly, it does not require computing the conformal structure, which can be technically involved. This makes easier both determining the space $\ker \Theta_0 = \ker \wt\Theta_0$ of almost Einstein scales for any particular $(2, 3, 5)$ distribution (as we do in Examples \ref{example:flat-model}, \ref{example:Fq}, \ref{example:rolling-distribution}) and constructing new examples of almost Einstein $(2, 3, 5)$ distributions (as in Example \ref{example:31}).

On the other hand, if the induced conformal structure is available, we can use it to compute $\Theta_0$ immediately, which may be easier than using the formulae given in Section \ref{section:BGG-operator}:
\begin{proposition}\label{proposition:pullback}
Let $(M, \mbD)$ be an oriented $(2, 3, 5)$ distribution. The $(2, 3, 5)$ almost Einstein operator $\Theta_0$ satisfies
\[
	\Theta_0 = \iota^* \wt\Theta_0 ,
\]
where $\iota$ is the inclusion $\mbD \hookrightarrow TM$ and $\wt\Theta_0$ is the (conformal) almost Einstein operator for the induced conformal structure $\mbc_{\mbD}$.
\end{proposition}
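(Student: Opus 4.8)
The plan is to exploit that, by \cite[Proposition 4]{HammerlSagerschnig}, the standard tractor bundle $\mcV$ and its normal connection $\nabla^{\mcV}$ are common to the $(2,3,5)$ geometry and to the induced conformal structure $\mbc_{\mbD}$, and to compute \emph{both} first BGG operators from this single connection. Writing $L_0, \wt L_0 : \Gamma(\mcE[1]) \to \Gamma(\mcV)$ for the $\G_2$- and conformal BGG splitting operators, and $\pi_{\mcH}, \wt\pi_{\mcH}$ for the respective harmonic projections of $\mcV$-valued $1$-forms onto $\bigodot^2\mbD^*[1]$ and $\bigodot^2_{\circ} T^*M[1]$, we have $\Theta_0(\sigma) = \pi_{\mcH}(\nabla^{\mcV} L_0 \sigma)$ and, by \eqref{equation:BGG-conformal}, $\wt\Theta_0(\sigma) = \wt\pi_{\mcH}(\nabla^{\mcV} \wt L_0 \sigma) = (\wt\nabla_a \wt\nabla_b \sigma + \wt\sfP_{ab}\sigma)_{\circ}$. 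The key idea is that I may replace $L_0$ by $\wt L_0$ in the first formula: I would then read off the $\G_2$-harmonic part of the single object $\nabla^{\mcV}\wt L_0\sigma$, whose conformal-harmonic part is already $\wt\Theta_0(\sigma)$, and check that the two projections are related by $\iota^*$.

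To justify the replacement, I would first record that $L_0\sigma$ and $\wt L_0 \sigma$ share their top two slots: both project to $\sigma$ under $\Pi_0$, and the slot one step down in the filtration of $\mcV$ is in either case the universal first-order term, namely the $\mbD$-derivative $\nabla_{\alpha}\sigma$. Hence $\wt L_0\sigma - L_0\sigma$ is a section of the filtration component $\mcV^{0}$ corresponding to $\bbV_0 \oplus \bbV_1 \oplus \bbV_2$. Now the harmonic curvature module $\mcH_1 = \bigodot^2\mbD^*[1]$ sits inside the slot $\mfg_{-1}^* \ot \bbV_{-1}$ of $\mfg_{-}^* \ot \bbV$; since $(2,3,5)$ geometries are torsion-free, a single application of $\nabla^{\mcV}$ reaches this slot from $\mcV^{0}$ only through the algebraic action of $\mfg_{-1}$ on the $\bbV_0$-component (the deeper components $\bbV_1, \bbV_2$ cannot reach it in one step), and this contribution lies in $\im \partial$, which $\pi_{\mcH}$ annihilates. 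Therefore $\pi_{\mcH}(\nabla^{\mcV}(\wt L_0\sigma - L_0\sigma)) = 0$ and $\Theta_0(\sigma) = \pi_{\mcH}(\nabla^{\mcV}\wt L_0\sigma)$.

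It then remains to compare the two harmonic projections of $\nabla^{\mcV}\wt L_0\sigma$. The $\mfg_{-1}^*\ot\bbV_{-1}$-component of $\nabla^{\mcV}\wt L_0\sigma$ is $\wt\nabla_\alpha \wt\nabla_\beta\sigma + \wt\sfP_{\alpha\beta}\sigma$ read along $\mbD \times \mbD$, and $\pi_{\mcH}$ selects its symmetric part; since $\mbD$ is isotropic for $\mbc_{\mbD}$, there is no induced metric on $\mbD$ and hence no trace to remove, so this symmetric part is exactly $\iota^a_\alpha \iota^b_\beta (\wt\nabla_a\wt\nabla_b\sigma + \wt\sfP_{ab}\sigma)$. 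As $\iota^*$ kills the pure-trace part of a symmetric $2$-tensor (again because $\mbD$ is null), this equals $\iota^*(\wt\nabla_a\wt\nabla_b\sigma + \wt\sfP_{ab}\sigma)_{\circ} = \iota^*\wt\Theta_0(\sigma)$, yielding $\Theta_0 = \iota^*\wt\Theta_0$. I expect the main obstacle to be the bookkeeping in the second paragraph: one must verify carefully, using the grading of $\bbV$ and the $P$-module structure of $\mcH_1$, that no term from the lower slots of $\wt L_0\sigma$ survives the $\G_2$-harmonic projection. With the explicit formulae of Section \ref{section:BGG-operator} and Nurowski's formula for $\mbc_{\mbD}$ in hand, one could instead verify the identity by a direct, if considerably longer, computation.
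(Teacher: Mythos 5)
Your argument is correct in substance, but it takes a genuinely different route from the paper's, which never touches the splitting operators. The paper starts from the explicit formula \eqref{equation:BGG}, $\Theta_0(\sigma)_{\alpha\beta}=\nabla_{(\alpha}\nabla_{\beta)}\sigma-\sfP_{(\alpha\beta)}\sigma$, notes (as you do) that isotropy of $\mbD$ makes the trace adjustment in \eqref{equation:BGG-conformal} invisible to $\iota^*$, invokes the identity $(\iota^*\wt\sfP)_{\alpha\beta}=-\sfP_{\alpha\beta}$ established after Proposition \ref{proposition:Levi-Civita}, and thereby reduces the claim to the Hessian identity $\iota^a{}_{\alpha}\iota^b{}_{\beta}\wt\nabla_a\wt\nabla_b\sigma=\nabla_{(\alpha}\nabla_{\beta)}\sigma$, which it checks from the explicit comparison of $\wt\nabla$ with $\nabla$ in Proposition \ref{proposition:Levi-Civita}(1); the discrepancy there is $-\tfrac{1}{2}\iota^c{}_{\blacklozenge}\wt\nabla_c\sigma\,\mcL_{\alpha\beta}$, killed by symmetrization because $\mcL$ is skew. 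Your tractor-level argument buys independence from those explicit formulae, at the price of more representation-theoretic bookkeeping; note that the very same cancellation does the work in disguise, since by Proposition \ref{proposition:splitting}(2) the only contribution of $u:=\wt L_0\sigma-L_0\sigma\in\Gamma(\mcV^0)$ to the $\mfg_{-1}^*\otimes\bbV_{-1}$ slot is $\tfrac{1}{2}u_0\,\mcL_{\beta\alpha}$, proportional to the skew form $\mcL$, while $\ker\square\cong\bigodot^2\mbD^*[1]$ is the symmetric part of that slot.

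Two points in your second paragraph need to be made precise before the proof is airtight. First, $\nabla^{\mcV}\wt L_0\sigma$ need not lie in $\ker\partial^*$ for the $\G_2$ codifferential (it satisfies the \emph{conformal} normalization, not the $\G_2$ one), so your $\pi_{\mcH}$ must be taken to be the full algebraic Hodge projection annihilating $\im\partial\oplus\im\partial^*$, which agrees with $\Pi_1$ on $\ker\partial^*$. Second, ``lies in $\im\partial$'' is literally true only for the entire homogeneity-zero component of $\nabla^{\mcV}u$, which is $\partial u_0$ and has a second piece in $\mfg_{-2}^*\otimes\bbV_{-2}$, not for its $\mfg_{-1}^*\otimes\bbV_{-1}$ component alone; to conclude you should add the $G_0$-equivariance (Schur) observation that the $\smash{\bigodot^2\mbD^*[1]}$-isotypic component of this homogeneity sits entirely inside $\mfg_{-1}^*\otimes\bbV_{-1}$ as the symmetric part, so the Hodge projection is computed by symmetrizing that slot---at which point the skewness of $\mcL$ finishes the argument exactly as in the paper.
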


Section \ref{section:geometry} reviews some basic features about the geometry of $(2, 3, 5)$ distributions, introduces a variation of index notation adapted to such distributions, and develops the tractor calculus for the standard tractor bundle associated to that geometry. Section \ref{section:BGG-operator} employs that calculus to derive an explicit formula for $\Theta_0$ in terms of a partial connection $\nabla$ and the lowest-homogeneity component of $\sfP$. Section \ref{section:conformal-structure} relates the objects we have constructed for a $(2, 3, 5)$ distribution to objects associated to the induced conformal structure. Section \ref{section:Monge-normal-form} recalls the (local) Monge normal form that expresses a $(2, 3, 5)$ distribution in terms of a real-valued function $F$ of five variables and give formulae for $\nabla$, the lowest-homogeneity component of $\sfP$, and $\Theta_0$ in a preferred scale and frame determined by the normal form. Finally, in Section \ref{section:examples}, we use $\Theta_0$ to recover efficiently some known examples of almost Einstein scales for particular $(2, 3, 5)$ distributions and to produce a new example of an almost Einstein $(2, 3, 5)$ distribution with properties not exhibited before. We also give a nonexample: We show for a particular homogeneous $(2, 3, 5)$ distribution that $\Theta_0$ has no solutions and exploit this to show quickly that the holonomy of the conformal structure it induces has holonomy equal to $\G_2$.

Ian Anderson's Maple package $\mathtt{DifferentialGeometry}$ was used to compute the formulae in Section \ref{section:Monge-normal-form} and to carry out computations for examples in Section \ref{section:examples}. 

\thanks{It is a pleasure to thank Mike Eastwood for several helpful conversations.

The first author is an INdAM (Istituto Nazionale di Alta Matematica) research fellow. She gratefully acknowledges support from the Austrian Science Fund (FWF) via project J3071--N13 and support from project FIR--2013 Geometria delle equazioni differenziali. The second author gratefully acknowledges support from the Australian Research Council and the Austrian Science Fund (FWF), the latter via project P27072--N25.}

\section{The geometry of $(2, 3, 5)$ distributions}\label{section:geometry}

\subsection{The Levi bracket}

Recall that a $(2, 3, 5)$ distribution $\mbD$ on a manifold $M$ determines a filtration
\[
	\mbD \subset [\mbD, \mbD] \subset TM
\]
of the tangent bundle.

We will need certain maps canonically determined by such a distribution: The Lie bracket of vector fields induces a natural vector bundle map $\mbD \times \mbD \to [\mbD, \mbD] / \mbD$ defined by $(\xi_x, \eta_x) \mapsto q_{-2}([\xi, \eta]_x)$, where $q_{-2} : [\mbD, \mbD] \to [\mbD, \mbD] / \mbD$ is the canonical quotient map, and this map descends to a natural isomorphism
\[
	\mcL: {\textstyle \bigwedge^2 \mbD} \stackrel{\cong}{\to} [\mbD, \mbD] / \mbD =: \gr_{-2}(TM) .
\]
It is also convenient to denote
\[
	\mcE[1] := \gr_{-2} (TM) \cong {\textstyle \bigwedge^2 \mbD} .
\]
Similarly, the Lie bracket induces a vector bundle isomorphism
\[
	\mcL: \mbD \otimes ([\mbD, \mbD] / \mbD) \stackrel{\cong}{\to} TM / [\mbD, \mbD] =: \gr_{-3}(TM),
		\qquad
	\mathcal{L}(\xi_x,q_{-2}(\zeta_x)) := q_{-3}([\xi,\zeta]_x),
\]
where $q_{-3}: TM\to TM / [\mbD, \mbD]$ denotes the canonical quotient map. The bundle isomorphisms are components of the \textit{Levi bracket}.


\subsection{Adapted index notation}\label{subsection:adapted-index-notation}
To express sections of the graded bundles $\gr_{-i}(TM)$ and their duals and tensor products with weight bundles, we use use the following notation: We write a section $X \in \Gamma(\gr_{-1}(TM)) \cong \Gamma(\mbD)$ with an upper Greek index, as $X{}^{\alpha}$, a section $r \in \Gamma(\gr_{-2}(TM))$ with an upper solid lozenge, $\smash{r^{\blacklozenge}}$, or, when it causes no ambiguity, without an index, $r$, and a section $Y \in \Gamma(\gr_{-3}(TM)) \cong \Gamma(TM / [\mbD, \mbD])$ with a lower barred Greek index, as $Y_{\bar\alpha}$. Likewise, we write a section $Z \in \Gamma(\mbD^*)$ with a lower Greek index, $Z_{\alpha}$, $s \in \Gamma(([\mbD, \mbD] / \mbD)^*)$ with a lower solid lozenge, $\smash{s_{\blacklozenge}}$, or without an index, and $W \in \Gamma((TM / [\mbD, \mbD])^*)$ with an upper barred Greek index, $W^{\bar\alpha}$.

As usual, we indicate the canonical pairing between a section of one of these bundles and a section of its dual by repeating the indices, one upper and one lower, for example, $(X^{\alpha}, Z_{\alpha}) \mapsto Z_{\alpha} X^{\alpha}$. Just as for the usual use of tangent indices, we can write multiple indices to indicate sections of appropriate (possibly weighted) tensor bundles, for example $\omega^{\alpha \beta} \in \Gamma(\mbD \otimes \mbD [w])$. Then, we may regard the Levi bracket component $\smash{\mcL: {\textstyle \bigwedge^2 \mbD} \to [\mbD, \mbD] / \mbD}$ as a section $\mcL_{\alpha \beta} \in \smash{\Gamma(\bigwedge^2 \mbD^*[1])}$ or just as well a section $\mcL^{\alpha \beta} \in \smash{\Gamma(\bigwedge^2 \mbD[-1])}$, which we normalise to satisfy $\mcL^{\gamma \alpha} \mcL_{\gamma \beta} = \delta^{\alpha}{}_{\beta}$. Similarly, we may identify the component $\smash{\mcL: \mbD \otimes ([\mbD, \mbD] / \mbD) \to TM / [\mbD, \mbD]}$ as a section $\smash{\mcL_{\bar\alpha \beta} \in \Gamma(\mbD^* \otimes (TM / [\mbD, \mbD])[-1])}$ or a section $\smash{\mcL^{\bar\beta \alpha} \in \Gamma(\mbD \otimes (TM / [\mbD, \mbD])^*[1])}$, which we normalise to satisfy $\mcL^{\bar\gamma \alpha} \mcL_{\bar\gamma \beta} = \delta^{\alpha}{}_{\beta}$.\footnote{We may view $\mcL^{\alpha \beta}$ as a bundle isomorphism $\smash{\bigwedge^2 \mbD^* \to ([\mbD, \mbD] / \mbD)^*}$; this is $-\frac{1}{8}$ times the component induced by the algebraic bracket component $\smash{\bigwedge^2 \mfg_{+1} \to \mfg_{+2}}$. Likewise, $\smash{\mcL^{\bar\alpha \beta}}$, viewed as a bundle isomorphism $\smash{\mbD^* \otimes ([\mbD, \mbD] / \mbD)^* \to (TM / [\mbD, \mbD])^*}$, is $-\frac{2}{9}$ times the algebraic bracket component induced by the map $\mfg_{+1} \times \mfg_{+2} \to \mfg_{+3}$.}

We can use the canonical sections $\mathcal{L}_{\alpha\beta}, \mcL^{\alpha\beta}, \mathcal{L}_{\bar\alpha \beta}, \mcL^{\bar\alpha \beta}$ to change arbitrarily the position and/or barredness of an index at the cost of an appropriate change of weight and coefficient. For example, we may identify $\xi^{\alpha} \in \Gamma(\mbD)$ with a section $\xi_{\bar\alpha} := \mcL_{\bar\alpha \beta} \xi^{\beta} \in \Gamma((TM / [\mbD, \mbD])[-1])$, a section $\xi_{\alpha} := \mcL_{\alpha\beta} \xi^{\beta} \in \Gamma(\mbD^*[1])$, or a section $\xi^{\bar\alpha} := - \frac{3}{4} \mcL^{\bar\alpha \gamma} \mcL_{\beta\gamma} \xi^{\beta} \in \Gamma((TM / [\mbD, \mbD])^*[2])$.\footnote{The occurrence of the factor $\smash{-\frac{3}{4}}$ here is a consequence of the choice of normalization of $\smash{\mcL^{\bar\alpha \beta}}$ in the previous paragraph.}


\subsection{Weyl structures}
\label{subsection:Weyl-structures}
Let $\mbD$ be an oriented $(2,3,5)$ distribution,  $(\mathcal{G}\to M, \omega)$ the canonically associated regular, normal parabolic geometry of type $(\G_2, P)$, and $\mathcal{G}_0=\mathcal{G}/P_+$  the underlying graded frame bundle of $\mbD$. A (local) \emph{Weyl structure} of the geometry $(\mathcal{G}\to M, \omega)$ is a (local) $G_0$-equivariant section $$s:\mathcal{G}_0\to\mathcal{G}$$ of the projection $\mathcal{G}\to\mathcal{G}_0$ \cite{CapSloWeyl, CapSlovak}. The pullback of the Cartan connection by a Weyl structure $$s^*\omega=s^*\omega_{-}+s^*\omega_0+s^*\omega_+\in\Omega^1(\mathcal{G}_0,\mathfrak{g}_{-}\oplus\mathfrak{g}_0\oplus\mathfrak{g}_+)$$ decomposes into  three components:
\begin{itemize}
\item The \emph{soldering form} $s^*\omega_{-}\in\Omega^1(\mathcal{G}_0,\mathfrak{g}_-)$ is a $G_0$-equivariant, horizontal $1$-form that can be equivalently viewed as an isomorphism
$$(\pi_{-3},\pi_{-2},\pi_{-1}): TM\to\gr(TM)=\gr_{-3}(TM)\oplus\gr_{-2}(TM)\oplus\gr_{-1}(TM)$$
splitting the filtration $\mbD \subset [\mbD, \mbD] \subset TM$. Here,
$\gr_{-1}(TM)=\mbD\cong\mathcal{G}_0\times_{G_0}\mathfrak{g}_{-1},$ $\gr_{-2}(TM)=[\mbD,\mbD]/\mbD\cong\mathcal{G}_0\times_{G_0}\mathfrak{g}_{-2},$ and $\gr_{-3}(TM)=TM/[\mbD,\mbD]\cong\mathcal{G}_0\times_{G_0}\mathfrak{g}_{-3}.$
\item The \emph{Weyl connection} $s^*\omega_0\in\Omega^1(\mathcal{G}_0,\mathfrak{g}_0)$ is a principal connection on $\mathcal{G}_0\to M$. It induces a linear connection on each  bundle associated to $\mathcal{G}_0$ and, via the soldering form, a linear connection on $TM\cong\gr(TM)\cong\mathcal{G}_0\times_{G_0}\mathfrak{g}_{-}$. All of these induced connections shall be denoted by $\nabla$ and referred to as Weyl connections.
\item The \emph{Rho tensor} $s^*\omega_{+}\in\Omega^1(\mathcal{G}_0,\mathfrak{g}_+)$  is a $G_0$-equivariant, horizontal $1$-form and can thus be  viewed as a $1$-form $\sfP\in\Omega^1(M,\gr(T^*M))$, where $\gr(T^*M)=\bigoplus_{i = 1}^{3}\mathrm{gr}_i(T^*M)=\mathcal{G}_0\times_{G_0}\mathfrak{g}_{+}$ and $\mathrm{gr}_i(T^*M)$ is naturally isomorphic to $\mathrm{gr}_{-i}(TM)^*$.
\end{itemize}

There is a bijective correspondence between the set of all Weyl structures and $\Gamma(\gr(T^*M))$. Having fixed one Weyl structure $s:\mathcal{G}_0\to\mathcal{G}$, the correspondence is given by mapping $\Upsilon=(\Upsilon_1,\Upsilon_2,\Upsilon_3)\in\Gamma(\gr(T^*M))$  to the Weyl structure $\hat{s}(u)=s(u) \exp \Upsilon_1(u) \exp \Upsilon_2(u) \exp \Upsilon_3(u),$ where we view $\Upsilon_i\in\Gamma(\gr_i(T^*M))$ as a $G_0$-equivariant function $\Upsilon_i:\mathcal{G}_0\to\mathfrak{g}_i$.

In particular, there is a bijective correspondence between Weyl structures of $(\mathcal{G}\to M, \omega)$ and linear connections on the bundle $\mathcal{E}[1],$ 
which in one direction  is given by mapping  $s:\mathcal{G}_0\to\mathcal{G}$ to the induced Weyl connection $\nabla$ on $\mathcal{E}[1]$ \cite[Corollary 5.1.6]{CapSlovak}. A scale, i.e., a nowhere vanishing section $\theta \in \Gamma(\mathcal{E}[1]),$ 
 determines a flat connection of $\mathcal{E}[1]$  and thus by the aforementioned bijective correspondence a Weyl structure of the geometry.\footnote{The term \textit{scale} here and the term \textit{almost Einstein scale} (for a conformal structure) mentioned in the introduction are both standard in the literature, but they suffer a mild inconsistency: An \textit{almost Einstein scale} of a conformal structure (respectively, oriented $(2, 3, 5)$ distribution) is a section of $\mcE[1]$ in the kernel of the almost Einstein operator $\wt\Theta_0$ (respectively, $\Theta_0$), but by definition such a section is a \textit{scale} iff it vanishes nowhere. Thus, not all almost Einstein scales are scales.} Weyl structures arising that way are called exact Weyl structures.
In \cite{thesis} (see also \cite{CapSagerschnig}), the  Weyl connection, the soldering form, and the Rho tensor  determined by a scale were constructed.

Fix a scale $\theta \in \Gamma(\mcE[1])$. Then there is a unique vector field $R\in\Gamma([\mbD,\mbD])$  and  a unique partial connection $\nabla:\Gamma(\mbD)\times\Gamma(\mbD)\to\Gamma(\mbD)$ such that $q_{-2}(R)=\theta$,
\[
\mathcal{L}(\nabla_{\gamma}\xi,\theta)=q_{-3}([\gamma,[\xi,R]]) \textrm{,}
\]
and 
\[
\nabla_{\gamma}\theta
	=\mathcal{L}(\nabla_{\gamma}\xi,\eta)+\mathcal{L}(\xi,\nabla_{\gamma}\eta)=0 
\]
for all $\xi, \eta, \gamma\in\Gamma(\mbD)$ with  $\mathcal{L}(\xi,\eta)= \theta$.  The field $R$ is called the \textit{generalised Reeb field} associated to $\theta$, and the partial connection $\nabla$ is part of the Weyl connection determined by $\theta$. 
We shall denote by $\alpha\in\Gamma(T^* M)$  the unique  $1$-form such that 
\[
\alpha(R)=1\textrm{,} \quad \textrm{and} \quad \alpha(\xi)=0 \quad \textrm{and} \quad \alpha([\xi,R])=0 \quad \textrm{for all} \quad \xi\in\Gamma(\mbD) .
\]
Moreover, we introduce two maps, $\Psi_1, \Psi_2:\Gamma(\mbD)\to\Gamma(\mbD)$, defined by
\[
	\mathcal{L}(\Psi_1(\gamma),\theta) = \tfrac{1}{2} q_{-3}([R,[\gamma,R]]),\qquad
\Psi_2(\gamma)=\alpha([\eta,[\gamma,R]])\xi-\alpha([\xi,[\gamma,R]])\eta ,
\]
where $\xi, \eta, \gamma\in \Gamma(\mbD)$ and $\mathcal{L}(\xi,\eta)=\theta$. Both maps satisfy the Leibniz Rule $\Psi_i(f\gamma)=f\,\Psi_i(\gamma)+(R\cdot f)\gamma$ for all $f \in C^{\infty}(M)$ and $\gamma \in \Gamma(\mbD)$.\footnote{In \cite{CapSagerschnig} $\Psi_1$ is called $\Psi$ and instead of $\Psi_2$ the map  $\Phi(\gamma):=-\nabla_{\xi}\nabla_{\eta}\gamma+\nabla_{\eta}\nabla_{\xi}\gamma+\nabla_{\pi_{-1}([\xi,\eta])}\gamma=-\Psi_2(\gamma)+2\Psi_1(\gamma)$ is used.}
 Then the  splitting $(\pi_{-3},\pi_{-2},\pi_{-1}):TM\to\gr(TM)$ determined by $\theta$ is given as follows:   $\pi_{-3}=q_{-3}:TM\to\gr_{-3}(TM)$ is the natural quotient map,  $\pi_{-2}:TM\to \gr_{-2}(TM)$ is given by  $\pi_{-2}(\zeta)=\alpha(\zeta)\theta$, and $\pi_{-1}:TM\to\mbD$ is   characterised by the conditions  
\begin{align*} \pi_{-1}(\xi)=\xi,\quad   \pi_{-1}(R)=0,\quad \pi_{-1}([R,\xi])=\tfrac{3}{5}\Psi_1(\xi)+\tfrac{2}{5}\Psi_2(\xi),
\end{align*}
  for all $\xi\in\Gamma(\mbD)$. 
The lowest-homogeneity component of the Rho tensor is a section of $\smash{\bigodot^2\mbD^*}$ given by
\begin{align}\label{RhoT}
\sfP(\xi)(\eta)=\tfrac{1}{5}\mathrm{d}\alpha(\Psi_1(\xi)-\Psi_2(\xi),\eta).
\end{align} 
Expressions for the remaining parts of the (exact) Weyl connection and the Rho tensor can be found in \cite{thesis}.


\subsection{Tractor calculus}
\label{section:tractor-calculus}
The $P$-invariant filtration of the standard representation $\mathbb{V}$ (see the appendix), determines a filtration 
\begin{equation}\label{filt}
\mcV = \mcV^{-2} \supset \mcV^{-1} \supset \mcV^0 \supset \mcV^1 \supset \mcV^2
\end{equation}
of the \emph{tractor bundle} $$\mathcal{V}:=\mathcal{G}\times_P\mathbb{V}.$$
There are natural identifications
$\mathcal{V}^{-2}/\mathcal{V}^{-1}\cong \mathcal{E}[1]$, $\mathcal{V}^{-1}/\mathcal{V}^0\cong \gr_{-3}(TM)[-1],$
$\mathcal{V}^0/\mathcal{V}^1\cong \gr_{-2}(TM)[-1],$
$\mathcal{V}^1/\mathcal{V}^2\cong \gr_{-1}(TM)[-1],$
and
$\mathcal{V}^2\cong\mathcal{E}[-1]$. Here, $\mcE[w]$ denotes $\mcE[1]^{\otimes w}$---in particular, $\mcE[-1] = \mcE[1]^* \cong \smash{\bigwedge^2 \mbD^*}$---and for a vector bundle $B \to M$ we denote $B[w] := B \otimes \mcE[w]$.
 
We encode these identifications in the composition series
\begin{equation}\label{comp}\mathcal{V}\cong \mathcal{E}[1] \fplusl \gr_{-3}(TM)[-1]\fplusl\gr_{-2}(TM)[-1]\fplusl\gr_{-1}(TM)[-1]\fplusl\mathcal{E}[-1].\end{equation}
Since $\mathcal{V}$ is the associated bundle corresponding to  a $\G_2$-representation (restricted to $P$), the regular, normal Cartan connection $\omega\in\Omega^1(\mathcal{G},\mathfrak{g}_2)$ induces a linear connection $\nabla^{\mathcal{V}}$ on $\mathcal{V}$, called the \emph{(normal) tractor connection}.

A choice of a scale $\theta$ determines an identification of $\mcV$ with its associated graded bundle,
\begin{equation}\label{comp_splitted}\mathcal{V}\cong  \mathcal{E}[1]\oplus\gr_{-3}(TM)[-1]\oplus\gr_{-2}(TM)[-1]\oplus\gr_{-1}(TM)[-1]\oplus\mathcal
{E}[-1],\end{equation}
i.e., a splitting of \eqref{filt} \cite[Corollary 5.1.3]{CapSlovak}. So, a section $t\in\Gamma(\mathcal{V})$ can be written as a $5$-tuple
\[t \stackrel{\theta}{=}
    \left(
        \begin{array}{c}
            \chi \\
            \phi^{\alpha} \\
            \upsilon \\
            \tau_{\alpha} \\
            \sigma \\
        \end{array}
    \right)
    		\in \Gamma
    \left(
        \begin{array}{c}
            \mcE[-1] \\
            \gr_{-1}(TM)[-1] \\
            \gr_{-2}(TM)[-1] \\
            \gr_{-3}(TM)[-1] \\
            \mcE[1] \\
        \end{array}
    \right)
    \textrm{.}
\]
Exploiting the Levi bracket component $\mcL_{\bar\alpha \beta}$ identifies the component $\gr_{-3}(TM)[-1]$ with $\gr_{-1}(TM) \otimes \gr_{-2}(TM)[-1] \cong \smash{{\textstyle (\mbD \otimes \bigwedge^2 \mbD)} \otimes {\textstyle \bigwedge^2 \mbD^*}} \cong \mbD$. On the other hand, the explicit form \eqref{table} of the representation $\bbV$ suggests that we view this bundle as $\smash{\mbD^* \otimes \bigwedge^2 \mbD} \cong \mbD^* [1]$ and so mark a section $\tau \in \gr_{-3}(TM)[-1]$ with a lower (unbarred) index. Similarly, we identify $\gr_{-1}(TM)[-1]$ with $\mbD [-1]$ and so mark a section $\phi$ thereof with an upper (again unbarred) index.

We further introduce the notation  $\sfP_{\alpha \beta}, \sfP_{\blacklozenge\alpha}, \sfP_{\alpha\blacklozenge}, \sfP_{\alpha}{}^{\bar\beta}, \sfP^{\bar\alpha}{}_{\beta}, \sfP_{\blacklozenge\blacklozenge}, \sfP_{\blacklozenge}{}^{\bar\alpha}, \sfP^{\bar\alpha}{}_{\blacklozenge}, \sfP^{\bar \alpha \bar \beta}$ for the components of the Rho tensor $\sfP$; these quantities are characterised by the decomposition formula
\begin{alignat}{4}
\sfP(\xi)(\eta)
	=  {}\phantom{+}{}&\sfP_{\alpha\beta}(\xi_{-1})^{\alpha}(\eta_{-1})^{\beta}
	  &{}+{}&\tfrac{1}{2}\sfP_{\alpha\blacklozenge}(\xi_{-1})^{\alpha}(\eta_{-2})^{\blacklozenge}
      &{}+{}&\tfrac{1}{3}\sfP_{\alpha}{}^{\bar\beta}(\xi_{-1})^{\alpha}(\eta_{-3})_{\bar\beta} \nonumber \\
	   {}+{}&\sfP_{\blacklozenge\beta}(\xi_{-2})^{\blacklozenge}(\eta_{-1})^{\beta}
	  &{}+{}&\tfrac{1}{2}\sfP_{\blacklozenge\blacklozenge}(\xi_{-2})^{\blacklozenge}(\eta_{-2})^{\blacklozenge}
	  &{}+{}&\tfrac{1}{3}\sfP_{\blacklozenge}{}^{\bar\beta}(\xi_{-2})^{\blacklozenge}(\eta_{-3})_{\bar\beta} \label{equation:Rho-components} \\
       {}+{}&\sfP^{\bar\alpha}{}_{\beta}(\xi_{-3})_{\bar\alpha}(\eta_{-1})^{\beta}
      &{}+{}&\tfrac{1}{2}\sfP^{\bar\alpha}{}_{\blacklozenge}(\xi_{-3})_{\bar\alpha}(\eta_{-2})^{\blacklozenge}
      &{}+{}&\tfrac{1}{3}\sfP^{\bar\alpha \bar\beta}(\xi_{-3})_{\bar\alpha}(\eta_{-3})_{\bar\beta} \nonumber,
\end{alignat}
where $\xi \stackrel{\theta}{=}(\xi_{-3},\xi_{-2},\xi_{-1})$ and $\eta \stackrel{\theta}{=}(\eta_{-3},\eta_{-2},\eta_{-1})$.\footnote{
Here, we view $\eta$ as a section of $\Gamma(\gr (TM))$ and hence $\sfP(\xi) \in \Gamma(\gr(T^*M))$ as a section of $\Gamma((\gr (TM))^*)$ via the canonical identification induced by the isomorphism $\mfp_+ \cong (\mfg_2 / \mfp)^*$ determined by the Killing form on $\mfg_2$. The pairings $\mfg_{-i} \times \mfg_{+i} \to \Bbb R$ it induces are recorded in the appendix. We use the coefficients in those pairings for the coefficients of the symbols $\sfP_{\alpha \beta}$, etc., so that the symbols satisfy the symmetry identities $\smash{\sfP_{\alpha\blacklozenge} = \sfP_{\blacklozenge\alpha}}$, $\smash{P_{\alpha}{}^{\bar\beta} = \sfP^{\bar\beta}{}_{\alpha}}$, $\smash{\sfP^{\bar\alpha}{}_{\blacklozenge} = \sfP_{\blacklozenge}{}^{\bar\alpha}}$. See Proposition \ref{proposition:Levi-Civita} and the following discussion.}

\begin{proposition}\label{proposition:splitting}
Fix an oriented $(2, 3, 5)$ distribution.
\begin{enumerate}
\item Under a change $\theta \rightsquigarrow \hat\theta$ of scale, say, with corresponding change $$s \rightsquigarrow \hat{s}=s \exp \Upsilon_1 \exp \Upsilon_2 \exp \Upsilon_3$$ of Weyl structure, the splitting transforms as follows:
\begin{equation*}\label{changeofsplitting}
   t \stackrel{\hat{\theta}}=\left(
        \begin{array}{c}
           \hat \chi \\
            \hat\phi^{\alpha} \\
            \hat\upsilon\\
            \hat\tau_{\alpha} \\
            \hat\sigma \\
        \end{array}
    \right)
    =
        \left(\begin{array}{c}
		\left(
			\begin{array}{c}
				  \chi
				- \phi^{\alpha}(\Upsilon_1)_{\alpha}
				- \upsilon\Upsilon_2-\tau_{\alpha}(\Upsilon_3)^{\alpha}
				+ 4\Upsilon_2\mathcal{L}^{\alpha\beta} (\Upsilon_1)_{\alpha}\tau_{\beta} \\
				{}+ \tfrac{1}{2}\sigma\Upsilon_2\Upsilon_2
				- \sigma(\Upsilon_1)_{\alpha}(\Upsilon_3)^{\alpha}
			\end{array}
			\right) \\
		\left(
			\begin{array}{c}
				  \phi^{\alpha}
				- 4\upsilon\mathcal{L}^{\beta\alpha}(\Upsilon_1)_{\beta}
				- 2\Upsilon_2\mathcal{L}^{\beta\alpha}\tau_{\beta}
				+8 \mathcal{L}^{\gamma\zeta}(\Upsilon_1)_{\gamma}\tau_{\zeta}\mathcal{L}^{\beta\alpha}(\Upsilon_1)_{\beta} \\
				{}+ \sigma(\Upsilon_3)^\alpha
				- 2\sigma\Upsilon_2\mathcal{L}^{\beta\alpha}(\Upsilon_1)_{\beta}
			\end{array}
		\right) \\

         \upsilon-4\mathcal{L}^{\alpha\beta}(\Upsilon_1)_{\alpha}\tau_{\beta}-\sigma\Upsilon_2\\
         \tau_{\alpha}+\sigma(\Upsilon_1)_{\alpha} \\
           \sigma \\
     \end{array}
    \right).
 \end{equation*}

\item In terms of the splitting, the Weyl connection $\nabla$, and the Rho tensor $\sfP$ determined by $\theta$, the tractor connection on $\mathcal{V}$ is given by  
\[\nabla^{\mathcal{V}}_{\beta}
    \left(
        \begin{array}{c}
            \chi \\
            \phi^{\alpha} \\
            \upsilon \\
            \tau_{\alpha} \\
            \sigma \\
                    \end{array}
    \right) =
     \left(
        \begin{array}{c}
            \nabla_\beta\chi +  \sfP_{\beta\zeta} \phi^{\zeta} + \sfP_{\beta\blacklozenge} \upsilon + \sfP_{\beta}{}^{\zeta} \tau_{\zeta}\\
            \nabla_{\beta}\phi^{\alpha} + \chi \delta^{\alpha}{}_{\beta}  +4\upsilon \mathcal{L}^{\gamma\alpha} \sfP_{\beta\gamma}+2 \sfP_{\beta\blacklozenge} \mathcal{L}^{\gamma\alpha} \tau_{\gamma}-\sigma \sfP_{\beta}{}^{\alpha}\\
            \nabla_{\beta}\upsilon + \tfrac{1}{2} \mathcal{L}_{\beta\gamma} \phi^{\gamma}+4 \mathcal{L}^{\gamma\zeta} \sfP_{\beta\gamma} \tau_{\zeta} +\sigma \sfP_{\beta\blacklozenge} \\
            \nabla_{\beta}\tau_{\alpha}+\tfrac{1}{2} \upsilon \mathcal{L}_{\beta\alpha} - \sigma \sfP_{\beta\alpha}\\
            \nabla_{\beta}\sigma-\tau_{\beta} \\
        \end{array}
    \right)
\]

\[\nabla^{\mathcal{V}}_{\blacklozenge}
    \left(
    \begin{array}{c}
            \chi \\
            \phi^{\alpha} \\
            \upsilon \\
            \tau_{\alpha} \\
            \sigma \\
                    \end{array}
            \right) =
     \left(
                \begin{array}{c}
           \nabla_{\blacklozenge} \chi+  \sfP_{\blacklozenge\gamma} \phi^{\gamma} + \sfP_{\blacklozenge\blacklozenge} \upsilon +   {\sfP}_{\blacklozenge}{}^{\gamma}\tau_{\gamma} \\
          \nabla_{\blacklozenge}  \phi^{\alpha} +4 \upsilon \mathcal{L}^{\gamma\alpha} \sfP_{\blacklozenge\gamma}+2 \sfP_{\blacklozenge\blacklozenge} \mathcal{L}^{\gamma\alpha} \tau_{\gamma} - \sigma {\sfP}_{\blacklozenge}{}^{\alpha}\\
          \nabla_{\blacklozenge}  \upsilon +  \chi +4 \mathcal{L}^{\gamma\zeta} \sfP_{\blacklozenge\gamma} \tau_{\zeta} + \sigma \sfP_{\blacklozenge\blacklozenge}\\
         \nabla_{\blacklozenge}   \tau_{\alpha}-\tfrac{1}{4}  \mathcal{L}_{\gamma\alpha} \phi^{\gamma}- \sigma \sfP_{\blacklozenge\alpha}\\
            \nabla_{\blacklozenge}\sigma +  \upsilon \\
        \end{array}
        \right)
\]

\[(\nabla^{\mathcal{V}})^{\beta}
    \left(
        \begin{array}{c}
            \chi \\
            \phi^{\alpha} \\
            \upsilon \\
            \tau_{\alpha} \\
            \sigma \\
        \end{array}
    \right) =
     \left(
        \begin{array}{c}
          \nabla^{\beta}  \chi +   \sfP^{\beta}{}_{\gamma} \phi^{\gamma}+ \sfP^{\beta}{}_{\blacklozenge} \upsilon +  {\sfP}^{\beta\gamma}\tau_{\gamma}\\
       \nabla^{\beta}     \phi^{\alpha} +4 \upsilon \mathcal{L}^{\gamma\alpha} {\sfP^{\beta}}_{\gamma}+2 \sfP^{\beta}{}_{\blacklozenge} \mathcal{L}^{\gamma\alpha} \tau_{\gamma}- \sigma {\sfP}^{\beta\alpha}\\
            \nabla^{\beta} \upsilon+4 \mathcal{L}^{\gamma\zeta} {\sfP^\beta}_{\gamma} \tau_{\zeta} +\sigma \sfP^{\beta}{}_{\blacklozenge} \\
          \nabla^{\beta}  \tau_{\alpha} + \chi\delta^{\beta}{}_{\alpha}- \sigma {\sfP^{\beta}}_{\alpha}\\
         \nabla^{\beta}   \sigma -\phi^{\beta} \\
        \end{array}
    \right) .
\]
\end{enumerate}
\end{proposition}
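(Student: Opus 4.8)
The plan is to obtain both parts by specialising the standard description of tractor objects in a Weyl structure (as in \cite[\S5.1--5.2]{CapSlovak}, \cite{CapSloWeyl}) to the standard representation $\bbV$, using the explicit matrices for the infinitesimal action of the Lie algebra of $\G_2$ on $\bbV$ recorded in the appendix. Throughout I would use that, under the identifications in the composition series \eqref{comp}, the five slots $\sigma, \tau_\alpha, \upsilon, \phi^\alpha, \chi$ correspond respectively to the graded pieces $\bbV_{-2}, \bbV_{-1}, \bbV_0, \bbV_1, \bbV_2$, so that $\mathfrak{g}_{-i}$ lowers and $\mathfrak{g}_j$ raises the slot index by the indicated amount.

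For part (1), a section $t \in \Gamma(\mcV)$ is represented in the splitting determined by $s$ by a $G_0$-equivariant function $v : \mcG_0 \to \bbV$, and since $\mcV$ is a genuine associated bundle, the representing function in the splitting determined by $\hat s = s \exp \Upsilon_1 \exp \Upsilon_2 \exp \Upsilon_3$ is $\hat v = (\exp \Upsilon_1 \exp \Upsilon_2 \exp \Upsilon_3)^{-1} \cdot v$, where $\cdot$ denotes the $P_+$-action on $\bbV$. Because $\mathfrak{g}_+ = \mathfrak{g}_1 \oplus \mathfrak{g}_2 \oplus \mathfrak{g}_3$ is nilpotent, each factor acts as a unipotent endomorphism of $\bbV$ that raises the slot index, so the product is evaluated by expanding a finite product of matrix exponentials with the explicit generators; reading off the components of $\hat v$ in the graded basis then gives the stated transformation. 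The quadratic terms---for instance $\tfrac{1}{2}\sigma \Upsilon_2 \Upsilon_2$, $-\sigma(\Upsilon_1)_\alpha (\Upsilon_3)^\alpha$, and $8 \mcL^{\gamma\zeta}(\Upsilon_1)_\gamma \tau_\zeta \mcL^{\beta\alpha}(\Upsilon_1)_\beta$---arise exactly from the products of two nilpotent generators in this expansion.

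For part (2), I would start from the general formula expressing a tractor connection through a Weyl structure,
\[
    \nabla^{\mcV}_\xi t = \nabla_\xi t + \xi \bullet t + \sfP(\xi) \bullet t,
\]
where $\nabla$ is the Weyl connection, $\xi$ is identified with its image in $\gr_-(TM) \cong \mcG_0 \times_{G_0} \mathfrak{g}_-$ via the soldering form, $\sfP(\xi) \in \gr_+(T^*M) \cong \mcG_0 \times_{G_0} \mathfrak{g}_+$, and $\bullet$ is the action on $\bbV$ of the Lie algebra of $\G_2$. The three displayed identities are the cases $\xi \in \gr_{-1}(TM)$, $\gr_{-2}(TM)$, $\gr_{-3}(TM)$. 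In each case the $\mathfrak{g}_{-i}$-part of $\xi$ produces the universal, Rho-free off-diagonal terms (such as $\chi \delta^\alpha{}_\beta$, $\tfrac{1}{2}\mcL_{\beta\gamma}\phi^\gamma$, and $-\tau_\beta$ in the first display), while the $\mathfrak{g}_j$-components of $\sfP(\xi)$ produce the Rho-dependent terms by raising the slot index by $j$; substituting the explicit generators and re-expressing the barred and unbarred indices of $\phi$ and $\tau$ through the normalised Levi components $\mcL^{\alpha\beta}, \mcL^{\bar\alpha\beta}$ yields the three expressions.

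The main obstacle in both parts is coefficient bookkeeping rather than conceptual difficulty. Although the shape of each formula is forced by the grading, the numerical factors ($\tfrac{1}{2}, 4, 2, -\tfrac{1}{4}$, and so on) depend sensitively on the normalisations fixed for $\mcL^{\alpha\beta}$ and $\mcL^{\bar\alpha\beta}$ (including the factor $-\tfrac{3}{4}$ used to raise a barred index) and on the coefficients in the pairing \eqref{equation:Rho-components}, which were chosen precisely to enforce the symmetries $\sfP_{\alpha\blacklozenge} = \sfP_{\blacklozenge\alpha}$, $\sfP_\alpha{}^{\bar\beta} = \sfP^{\bar\beta}{}_\alpha$, and $\sfP^{\bar\alpha}{}_\blacklozenge = \sfP_\blacklozenge{}^{\bar\alpha}$; threading these consistently through the representation matrices is the delicate step. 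A convenient internal check is that the outcome of part (2) must be scale-independent: the formula written in the scale $\hat\theta$ has to agree, after applying the change-of-splitting rule of part (1) together with the transformation laws for $\nabla$ and $\sfP$, with the formula written in $\theta$, and this compatibility pins down the coefficients and guards against sign and factor errors.
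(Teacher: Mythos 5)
Your proposal is correct and follows essentially the same route as the paper: both parts are obtained by specialising the general \v{C}ap--Slov\'ak formulas (the change-of-splitting formula \cite[Proposition 5.1.5]{CapSlovak} for part (1) and the expression $(\nabla^{\mcV}_{\xi}t)_i=\nabla_\xi t_i+\sum_j\sfP_j(\xi)\bullet t_{i-j}+\sum_j\xi_{-j}\bullet t_{i+j}$ of \cite[Proposition 5.1.10]{CapSlovak} for part (2)) to the standard representation, using the explicit $\mfg_2$-action on $\bbV$ tabulated in the appendix. Your remarks on where the quadratic terms come from and on the scale-independence check are sound supplements, but the core argument coincides with the paper's.
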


Throughout the formulae in Proposition \ref{proposition:splitting} we have used the identifications given at the end of Subsection \ref{subsection:adapted-index-notation} to change the barred indices that occur to unbarred ones (but we have not changed any upper indices to lower ones or vice versa).

\begin{proof}[Proof of Proposition \ref{proposition:splitting}]\phantom{}~

\begin{enumerate}
	\item This follows from the general formula for the change of the identification $\mathcal{V}\cong\gr(\mathcal{V})$ induced by a change of Weyl structures (see \cite[Proposition 5.1.5]{CapSlovak}) and the form of the $\mathfrak{g}_+$-action on $\mathbb{V}$, see \eqref{table}.
	\item Writing a section $t\in\Gamma(\mathcal{V})$ as $t \stackrel{\theta}{=}(t_{-2},\dots,t_2)$, a vector field $\xi\in\Gamma(TM)$ as  $\xi \stackrel{\theta}{=}(\xi_{-3},\xi_{-2},\xi_{-1})$, and the bundle map induced by the action of $\mathfrak{g}$ on $\mathbb{V}$ by $\bullet:\mathcal{A}\times\mathcal{V}\to\mathcal{V}$ (here, $\mcA := \mcG \times_P \mfg_2$ is the adjoint tractor bundle), the tractor connection is given by \cite[Proposition 5.1.10]{CapSlovak}
$$(\nabla^{\mathcal{V}}_{\xi}t)_i=\nabla_\xi t_i+\sum_{j=1}^{3}\sfP_j(\xi)\bullet t_{i-j}+\sum_{j=1}^{3}\xi_{-j}\bullet t_{i+j} .$$
Using the action of $\mathfrak{g}$ on $\mathbb{V}$, which we record in \eqref{table}, yields the given formulae. \qedhere
\end{enumerate}
\end{proof}

\section{The first standard BGG operator for $(2, 3, 5)$ distributions}
\label{section:BGG-operator}
For each irreducible representation of $\G_2$ on any manifold $M$ equipped with an oriented $(2, 3, 5)$ distribution $\mbD$ there is a corresponding sequence of invariant differential operators, called \emph{BGG operators} \cite{CSS, CalderbankDiemer}. Applying the Bott-Borel-Weil Theorem (see \cite[Theorem 3.3.5]{CapSlovak}) gives that for the standard representation $\bbV$ this sequence has the form
\[
	                                      \Gamma(                       \mcE   [ 1] )
	\mathop{\longrightarrow}^{\Theta_0}_2 \Gamma({\textstyle \bigodot^2 \mbD^* [ 1]})
	\mathop{\longrightarrow}^{\Theta_1}_3 \Gamma({\textstyle \bigodot^3 \mbD^*     })
	\mathop{\longrightarrow}^{\Theta_2}_4 \Gamma({\textstyle \bigodot^3 \mbD^* [-2]})
	\mathop{\longrightarrow}^{\Theta_3}_3 \Gamma({\textstyle \bigodot^2 \mbD^* [-4]})
	\mathop{\longrightarrow}^{\Theta_4}_2 \Gamma(                       \mcE   [-6] ) .
\]
(The numbers under the arrows respectively indicate the orders of the operators above them.) In this section we compute explicitly the first operator, $\Theta_0$.

Denote by $\Pi_0$ the canonical quotient map $\mathcal{V}\to \mathcal{V}/\mathcal{V}^{-1}=\mathcal{E}[1]$, 
$$
\Pi_0 :
	\begin{pmatrix}
           \chi \\
            \phi^{\alpha} \\
            \upsilon \\
            \tau_{\alpha} \\
            \sigma \\
	\end{pmatrix}
	\mapsto \sigma . 
    $$
Also, recall that the differential splitting operator $L_0: \Gamma(\mathcal{E}[1])\to\mathcal{V}$ is characterised by the  conditions (1) $\Pi_0 \circ L_0=\id_{\mcE[1]}$ and (2) $\partial^*\circ\nabla^{\mathcal{V}}\circ L_0=0$, where $\partial^*:T^*M\otimes\mathcal{V}\to\mathcal{V}$ is the Kostant codifferential, which is characterised by linearity and the formula $\partial^*(\phi\otimes t) := -\phi\bullet t$ on decomposable elements.\footnote{In this formula we regard $\phi \in \Gamma(T^*M)$ as an adjoint tractor via the canonical inclusion $T^*M \hookrightarrow \mcA$.} The \textit{first BGG operator} $\Theta_0:\Gamma(\mathcal{E}[1])\to \smash{\Gamma(\bigodot^2\mbD^*[1])}$ is then defined as 
$$\Theta_0 := \Pi_1 \circ\nabla^{\mathcal{V}}\circ L_0,$$ where $\Pi_1$ denotes the projection $\ker\,\partial^* \to \ker\,\partial^*/\im\,\partial^*\cong \smash{\bigodot^2\mbD^*[1]}$; the latter appearance of $\partial^*$ here refers to the Kostant codifferential $\smash{\bigwedge^2 T^*M \otimes \mcV \to T^*M \otimes \mcV}$. The overdetermined system $\Theta_0(\sigma)=0$ of partial differental equations is called a \textit{first BGG equation}.

Computing gives that $\partial^* \nabla^{\mcV}$ takes the form
\begin{equation}\label{equation:codifferential-nabla}
\partial^*\nabla^{\mathcal{V}}
		\begin{pmatrix}
            \chi \\
            \phi^{\alpha} \\
            \upsilon \\
            \tau_{\alpha} \\
            \sigma \\
		\end{pmatrix}
    =
    \begin{pmatrix}
        		\star \\
        		\star \\
        		-4 \mcL^{\beta \gamma} (\nabla_{\beta} \tau_{\gamma}  + \frac{1}{2} \upsilon \mcL_{\beta \gamma} - \sfP_{\beta \gamma} \sigma) - (\nabla_{\blacklozenge} \sigma + \upsilon)  \\
			\nabla_{\alpha}\sigma-\tau_{\alpha} \\
            0 \\
	\end{pmatrix} .
	\end{equation}
Computing from \cite[(3.26)]{thesis} gives that $\nabla_{\blacklozenge} \sigma = \mcL^{\beta \gamma} (\nabla_{\beta} \nabla_{\gamma} \sigma - \sfP_{\beta \alpha} \sigma)$, and substituting in \eqref{equation:codifferential-nabla} gives that $L_0$ takes the form\footnote{The upper index ${}^A$ here is just an index on the standard tractor bundle; we use it here only for consistency.}
\begin{equation}
L_0(\sigma)^A \stackrel{\theta}{=}
	\begin{pmatrix}
            \star \\
            \star \\
            -\mcL^{\gamma \zeta}(\nabla_{\gamma} \nabla_{\zeta} \sigma - \sfP_{\gamma \zeta} \sigma) \\
            \nabla_{\alpha} \sigma \\
            \sigma \\
	\end{pmatrix} .
\end{equation}
Finally, composing gives
\begin{equation}\label{equation:nabla-splitting-operator}
	\nabla^{\mathcal{V}}_{\beta} L_0(\sigma)^A
		\stackrel{\theta}{=}
			\begin{pmatrix}
				\star\\
				\star\\
				\star\\
				\nabla_{\beta}\nabla_{\alpha} \sigma - \sigma \sfP_{\beta \alpha} - \tfrac{1}{2} \mcL^{\gamma \zeta}(\nabla_{\gamma} \nabla_{\zeta} \sigma - \sigma \sfP_{\gamma \zeta}) \mcL_{\beta \alpha} \\
				0 \\
			\end{pmatrix} ,
\end{equation}
and the second-to-bottom component is precisely the image of $\nabla^{\mathcal{V}}_{\beta} L_0(\sigma)$ under the projection $\ker \partial^* \to \ker \partial^* / \im \partial^* \cong \smash{\bigodot^2 \mbD^* [1]}$.\footnote{In fact, we can perform this computation with a little less effort: Since we project $\nabla^{\mcV} L_0(\sigma)$ onto the space $\mbD^* [1]$ of (appropriately weighted) symmetric tensors, we only need to compute the symmetric part of the second-lowest slot of the right-hand side in the first formula in Proposition \ref{proposition:splitting}(3), namely, $\smash{\nabla_{(\alpha} \tau_{\beta)} - \sigma \sfP_{(\alpha \beta)}}$. In particular, this does not involve $\upsilon$, so we can still derive the formula for $\Theta_0$ without computing the middle slots of the right-hand sides of \eqref{equation:codifferential-nabla} and \eqref{equation:nabla-splitting-operator}.}

\begin{proposition}\label{proposition:BGG}
The first BGG operator $\Theta_0:\Gamma(\mathcal{E}[1])\to \smash{\Gamma(\bigodot^2\mbD^*[1])}$ corresponding to the standard representation (the unique $7$-dimensional representation) $\bbV$ of $\G_2$ is
\begin{equation}\label{equation:BGG}
	\Theta_0(\sigma)_{\alpha \beta} = \nabla_{(\alpha}\nabla_{\beta)}\sigma-\sfP_{(\alpha \beta)}\sigma .
\end{equation}
\end{proposition}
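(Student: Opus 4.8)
The plan is to read off the BGG operator directly from the computation already carried out in the paragraph immediately preceding the statement, since almost all of the work has been done. By definition, $\Theta_0 = \Pi_1 \circ \nabla^{\mathcal{V}} \circ L_0$, where $\Pi_1$ is the projection $\ker \partial^* \to \ker\partial^*/\im\partial^* \cong \smash{\bigodot^2 \mbD^*[1]}$. The formula \eqref{equation:nabla-splitting-operator} already exhibits $\nabla^{\mathcal{V}}_{\beta} L_0(\sigma)$ as an element of $T^*M \otimes \mcV$ whose only nonzero slots are the $\tau$-slot and the (vanishing) $\sigma$-slot, and the displayed $\tau$-slot is
\[
	\nabla_{\beta}\nabla_{\alpha}\sigma - \sigma \sfP_{\beta\alpha}
		- \tfrac{1}{2}\mcL^{\gamma\zeta}(\nabla_{\gamma}\nabla_{\zeta}\sigma - \sigma\sfP_{\gamma\zeta})\mcL_{\beta\alpha} .
\]
So the main task is to verify that this expression lies in $\ker\partial^*$, that applying $\Pi_1$ to it amounts to taking its symmetric part in $\alpha,\beta$, and that this symmetric part is exactly \eqref{equation:BGG}.

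First I would confirm that $\nabla^{\mathcal{V}}_{\beta} L_0(\sigma)$ is a section of $\ker\partial^*$: this is automatic from the characterisation of $L_0$, since the splitting operator is defined precisely so that $\partial^* \circ \nabla^{\mathcal{V}} \circ L_0 = 0$, whence $\nabla^{\mathcal{V}} L_0(\sigma)$ takes values in $\ker\partial^*$ and the projection $\Pi_1$ is defined on it. Next I would identify $\ker\partial^*/\im\partial^*$ in the relevant slot with $\smash{\bigodot^2\mbD^*[1]}$, as guaranteed by the Bott–Borel–Weil computation recorded at the start of Section \ref{section:BGG-operator}; concretely, in the $\mbD^*[1]$-component the codifferential $\partial^*$ kills the symmetric trace-free part and the quotient map $\Pi_1$ is realised by symmetrisation in the two lower indices. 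The trace term $-\tfrac12 \mcL^{\gamma\zeta}(\cdots)\mcL_{\beta\alpha}$ appearing in \eqref{equation:nabla-splitting-operator} is precisely the piece that is annihilated upon passing to the symmetric (equivalently, trace-free) part: since $\mcL_{\beta\alpha}$ is skew, $\mcL_{(\beta\alpha)} = 0$, so symmetrising in $\alpha,\beta$ deletes that entire term.

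Taking the symmetric part of the $\tau$-slot therefore gives
\[
	\Theta_0(\sigma)_{\alpha\beta}
		= \nabla_{(\beta}\nabla_{\alpha)}\sigma - \sigma\sfP_{(\beta\alpha)}
		= \nabla_{(\alpha}\nabla_{\beta)}\sigma - \sfP_{(\alpha\beta)}\sigma ,
\]
which is exactly \eqref{equation:BGG}. The only point demanding genuine care—and the step I expect to be the main obstacle—is the bookkeeping of the identification $\ker\partial^*/\im\partial^* \cong \smash{\bigodot^2\mbD^*[1]}$ and the verification that, in this two-dimensional setting, the map $\Pi_1$ really does act as symmetrisation on the $\mbD^*[1]$-valued slot; once that identification is pinned down via Kostant's theorem, the remaining manipulations are the routine index symmetrisations indicated above. (The footnote after \eqref{equation:nabla-splitting-operator} already observes that one need only compute the symmetric part $\nabla_{(\alpha}\tau_{\beta)} - \sigma\sfP_{(\alpha\beta)}$, which is a useful shortcut: it lets one bypass the middle slots entirely and compute $\Theta_0$ directly from the $\tau$-slot of Proposition \ref{proposition:splitting}(2).)
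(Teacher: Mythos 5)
Your proposal is correct and follows essentially the same route as the paper: the paper's proof of Proposition \ref{proposition:BGG} is exactly the computation culminating in \eqref{equation:nabla-splitting-operator}, followed by the observation (spelled out in the accompanying footnote) that $\Pi_1$ acts by symmetrisation on the $\tau$-slot, so the skew trace term $-\tfrac12\mcL^{\gamma\zeta}(\cdots)\mcL_{\beta\alpha}$ drops out. One small caveat: the starred entries in \eqref{equation:nabla-splitting-operator} are uncomputed rather than zero, but this does not affect your argument since $\Pi_1$ only depends on the relevant graded component.
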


We will see in the discussion after Proposition \ref{proposition:Levi-Civita} that the lowest-homogeneity component $\sfP_{\alpha \beta}$ of the Rho tensor determined by a scale $\sigma$ is already symmetric, that is, that $\sfP_{(\alpha \beta)}= \sfP_{\alpha \beta}$.

It is also possible to derive \eqref{equation:BGG} naively, that is, without computing the tractor covariant derivative $\nabla^{\mcV}$ and splitting operator $L_0$.

\begin{remark}[Alternative derivation of \eqref{equation:BGG}]
Since $\Theta_0$ is an operator $\smash{\Gamma(\mcE[1]) \to \Gamma(\bigodot^2 \mbD^*[1])}$, it has leading term $\sigma \mapsto \nabla_{(\alpha} \nabla_{\beta)} \sigma$, and since $\Theta_0$ is invariant, the leading term determines the operator modulo invariant curvature terms. The leading term is not invariant, but instead under a change of Weyl connection $s \rightsquigarrow \hat s = s \exp \Upsilon_1 \exp \Upsilon_2 \exp \Upsilon_3$ transforms as $\sigma \mapsto \nabla_{(\alpha} \nabla_{\beta)} \sigma + [\nabla_{(\alpha} (\Upsilon_1)_{\beta)} - \tfrac{1}{2} (\Upsilon_1)_{\alpha} (\Upsilon_1)_{\beta}] \sigma$ (see the general formula \cite[Proposition 5.1.6]{CapSlovak}). On the other hand, specializing \cite[Proposition 5.1.8]{CapSlovak} gives that $\sfP_{(\alpha \beta)}$ transforms as $\smash{\widehat \sfP_{(\alpha \beta)}} = \sfP_{(\alpha\beta)} + \nabla_{(\alpha} (\Upsilon_1)_{\beta)} - \tfrac{1}{2} (\Upsilon_1)_{\alpha} (\Upsilon_1)_{\beta}$. Thus, $\sigma \mapsto \nabla_{(\alpha} \nabla_{\beta)} \sigma - \sfP_{(\alpha \beta)} \sigma$ is invariant under changes of the Weyl structure. On the other hand, any invariant differential operator $\smash{\Gamma(\mcE[1]) \to \Gamma(\bigodot^2 \mbD^*[1])}$ has homogeneity $+2$, whereas the harmonic curvature has homogeneity $+4$, so no invariant curvature terms can appear in such an operator.
\end{remark}

The explicit formula \eqref{equation:BGG} furnishes another interpretation of solutions of $\Theta_0$ (or at least, nonvanishing solutions), in particular one that makes no reference to the induced conformal structure.

\begin{corollary}\label{corollary:235-characterization}
For an oriented $(2, 3, 5)$ distribution, a scale $\sigma$ (that is, a nonvanishing section $\sigma \in \Gamma(\mcE[1])$) is a solution of $\ker \Theta_0$ iff the lowest-homogeneity component $\sfP_{\alpha \beta}$ of the Rho tensor $\sfP$ in that scale is zero.
\end{corollary}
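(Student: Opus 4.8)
The corollary claims that for a *nonvanishing* section $\sigma$ (a scale), we have $\sigma \in \ker\Theta_0$ iff $\sfP_{\alpha\beta} = 0$ in that scale.

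We have the explicit formula from Proposition \ref{proposition:BGG}:
$$\Theta_0(\sigma)_{\alpha\beta} = \nabla_{(\alpha}\nabla_{\beta)}\sigma - \sfP_{(\alpha\beta)}\sigma$$

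And we're told (from the discussion after Prop \ref{proposition:Levi-Civita}, which we can assume) that $\sfP_{(\alpha\beta)} = \sfP_{\alpha\beta}$.

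**Key observation:**

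The crucial insight is about what happens when we compute $\Theta_0(\sigma)$ in the scale $\sigma$ *itself*. Recall from the Weyl structures subsection: a scale $\theta \in \Gamma(\mcE[1])$ determines a Weyl connection $\nabla$ on $\mcE[1]$ that is the flat connection associated to $\theta$. So if we use $\sigma$ as the scale, then $\nabla\sigma = 0$ (since $\sigma$ is parallel for its own associated connection).

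Let me think about this more carefully.

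The plan is to exploit the invariance of $\Theta_0$ together with the explicit formula \eqref{equation:BGG} from Proposition \ref{proposition:BGG}, by evaluating everything in the Weyl structure determined by the scale $\sigma$ itself. Since $\Theta_0$ is an invariant operator, $\Theta_0(\sigma)$ is a well-defined section of $\smash{\bigodot^2 \mbD^*[1]}$ that does not depend on the choice of Weyl structure used to compute it; we are therefore free to choose the most convenient one.

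The crucial observation is that a scale $\sigma \in \Gamma(\mcE[1])$ determines, via the bijective correspondence described in Subsection \ref{subsection:Weyl-structures}, the exact Weyl structure whose induced connection on $\mcE[1]$ is the flat connection for which $\sigma$ is parallel. Concretely, the partial connection $\nabla$ determined by the scale $\theta = \sigma$ satisfies $\nabla_\gamma \sigma = 0$ for all $\gamma \in \Gamma(\mbD)$, exactly as recorded in the defining properties of the Weyl connection in Subsection \ref{subsection:Weyl-structures}. First I would substitute this into \eqref{equation:BGG}: with $\nabla_\beta \sigma = 0$, the leading term collapses, since $\nabla_{(\alpha} \nabla_{\beta)} \sigma = \nabla_{(\alpha}(0) = 0$, and hence
\begin{equation*}
\Theta_0(\sigma)_{\alpha\beta} = -\sfP_{(\alpha\beta)}\sigma = -\sfP_{\alpha\beta}\sigma,
\end{equation*}
where the last equality uses the symmetry $\sfP_{(\alpha\beta)} = \sfP_{\alpha\beta}$ noted after Proposition \ref{proposition:BGG}.

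With this identity in hand the corollary is immediate: because $\sigma$ is a scale and therefore nowhere vanishing, the section $-\sfP_{\alpha\beta}\sigma$ vanishes identically if and only if $\sfP_{\alpha\beta}$ vanishes identically. Thus $\sigma \in \ker \Theta_0$ iff the lowest-homogeneity component $\sfP_{\alpha\beta}$ of the Rho tensor in the scale $\sigma$ is zero, which is precisely the claim. I do not expect any genuine obstacle here; the entire content of the argument is the recognition that evaluating $\Theta_0(\sigma)$ in the Weyl structure canonically associated to $\sigma$ annihilates the second-order term and reduces the BGG formula to an algebraic condition on $\sfP_{\alpha\beta}$. This is the standard mechanism by which first BGG operators detect the vanishing of the Rho tensor in a distinguished scale, and the only facts it requires—the invariance of $\Theta_0$, the parallelism $\nabla\sigma = 0$ in the scale's own Weyl connection, and the symmetry of $\sfP_{\alpha\beta}$—are all already established above.
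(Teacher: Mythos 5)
Your proposal is correct and follows exactly the paper's own argument: evaluate the explicit formula \eqref{equation:BGG} in the Weyl structure determined by the scale $\sigma$ itself, use $\nabla\sigma = 0$ and the symmetry $\sfP_{(\alpha\beta)} = \sfP_{\alpha\beta}$ to reduce to $\Theta_0(\sigma)_{\alpha\beta} = -\sfP_{\alpha\beta}\sigma$, and conclude from the nonvanishing of $\sigma$. No discrepancies to report.
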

\begin{proof}
In the scale $\sigma$, the partial connection $\nabla$ satisfies $\nabla \sigma = 0$ and $\sfP_{\alpha \beta}$ is symmetric, so $\Theta_0(\sigma)_{\alpha \beta} = -\sfP_{\alpha \beta} \sigma$.
\end{proof}

\begin{remark}
One can formulate the analogous problem for the geometry of (oriented) $(3, 6)$ distributions---$6$-manifolds $N$ equipped with an oriented $3$-plane distribution $\mbE$ satisfying $[\mbE, \mbE] = TN$---which shares numerous features with that of oriented $(2, 3, 5)$ distributions. For our purposes the most important are that (1) this geometry can be realised as a parabolic geometry \cite[Subsubsection 4.3.2]{CapSlovak}, and (2) any such distribution determines a canonical conformal structure $\mbc_{\mbE}$ on $N$, this time owing to the exceptional inclusion $\Spin_0(3, 4) \hookrightarrow \SO(4, 4)$ \cite{Bryant36}. For such conformal structures, we may identify $\mcE[1]$ with the positive square root of $\smash{\bigwedge^3 \mbE}$, and the BGG operator corresponding to the $8$-dimensional spin representation of $\Spin_0(3, 4)$ is the map $\Theta_0 : \Gamma(\mcE[1]) \to \smash{\Gamma(\bigodot^2 \mbE^* [1])}$ given by applying the conformal almost Einstein operator $\wt\Theta_0$ and pulling back by the inclusion $\mbE \hookrightarrow TN$. One can proceed as we have done here for oriented $(2, 3, 5)$ distributions to check whether solutions $\Theta_0$ are automatically normal, and hence whether we may interpret the kernel of that operator as the space of almost Einstein scales of $\mbc_{\mbE}$. The analysis of the normalization condition mentioned in the proof of Theorem \ref{theorem:normality-of-solutions} for the geometry of oriented $(3, 6)$ distributions would be somewhat more algebraically involved than the analysis in that proof, as the bundle where harmonic curvature takes values for that geometry, namely $\smash{(\bigodot^2 \mbE \otimes \bigodot^2 \mbE^*)_{\circ} [2]}$, is comparatively complicated.
\end{remark}

\section{Relationship with the induced conformal geometry}
\label{section:conformal-structure}
In \cite{Nurowski} Nurowski showed that a $(2,3,5)$ distribution $\mbD$ on a $5$-manifold $M$ canonically determines a conformal structure $\mathbf{c}_{\mbD}$ of signature $(2,3)$ on $M$.

If $\mbD$ is oriented, then taking the determinant of the isomorphism
\[
	TM \cong (TM / [\mbD, \mbD]) \fplusl ([\mbD, \mbD] / \mbD) \fplusl \mbD
\]
and invoking the Levi bracket isomorphisms  gives
\begin{align*}
	{\textstyle \bigwedge^5 TM}
		&\cong
			\smash{
			{\textstyle \bigwedge^2 (          TM / [\mbD, \mbD])} 
				\otimes
			                        ([\mbD, \mbD] / \mbD        )
				\otimes
			{\textstyle \bigwedge^2          \mbD                }
			}\\
		&\cong
			\smash{
			{\textstyle \bigwedge^2 (\mbD \otimes \bigwedge^2 \mbD)}
				\otimes
			{\textstyle \bigwedge^2  \mbD                          }
				\otimes
			{\textstyle \bigwedge^2  \mbD                          }
			}\\
		&\cong
			\smash{
			({\textstyle \bigwedge^2 \mbD})^{\otimes 5}
			} .
\end{align*}
So, we may canonically identify $\smash{\bigwedge^2 \mbD}$ with a $5$th root of $\bigwedge^5 TM$; this root is the bundle of conformal scales of weight $1$ and in that context is usually denoted $\mcE[1]$ (see, for example, \cite{BEG}), which motivated our use of the notation $\mcE[1]$ for $\smash{\bigwedge^2 \mbD}$.

A scale $\theta\in\Gamma(\gr_{-2}(TM))=\Gamma(\mathcal{E}[1])$ determines a representative  metric $g_{\theta}\in\mathbf{c}_{\mbD},$ which in the index notation introduced earlier is  given by 
\begin{equation}\label{metric}
g_{\theta}(\xi,\eta) = (\xi_{-1})^{\alpha}(\eta_{-3})_{\alpha}-\xi_{-2} \eta_{-2}+(\xi_{-3})_{\alpha}(\eta_{-1})^{\alpha}
\end{equation}
for $\xi\stackrel{\theta}{=}(\xi_{-3},\xi_{-2},\xi_{-1})$, $\eta \stackrel{\theta}{=}(\eta_{-3},\eta_{-2},\eta_{-1})$.

\begin{proposition}\label{proposition:Levi-Civita} Let $\theta$ be a scale of an oriented $(2, 3, 5)$ distribution, and let $g_{\theta}$ denote the associated metric.
\begin{enumerate}
\item In the splitting determined by $\theta$, the Weyl connection $\nabla$ determined by $\theta$ is related to the Levi-Civita connection $\wt\nabla$ of  $g_{\theta}$ as follows:
\[\wt\nabla_{\beta}
    \left(
        \begin{array}{c}
            (\eta_{-1})^{\alpha} \\
            \eta_{-2} \\
            (\eta_{-3})_{\alpha}\\
                  \end{array}
    \right) =
     \left(
        \begin{array}{c}
           \nabla_{\beta}(\eta_{-1})^{\alpha}+4\eta_{-2} \mathcal{L}^{\gamma \alpha} \sfP_{\beta \gamma}-2 \sfP_{\beta\blacklozenge} \mathcal{L}^{\alpha \gamma}  (\eta_{-3})_{\gamma}\\
            \nabla_{\beta}\eta_{-2} + \tfrac{1}{2} \mathcal{L}_{\beta \gamma}(\eta_{-1})^{\gamma}+4 \mathcal{L}^{\gamma \zeta} \sfP_{\beta \gamma}  (\eta_{-3})_{\zeta}  \\
            \nabla_{\beta} (\eta_{-3})_{\alpha}+\tfrac{1}{2}\eta_{-2} \mathcal{L}_{\beta\alpha}\\
                    \end{array}
    \right)
\]

\[\wt\nabla_{\blacklozenge}
    \left(
    \begin{array}{c}
                      (\eta_{-1})^{\alpha} \\
            \eta_{-2} \\
            (\eta_{-3})_{\alpha}\\
        \end{array}
            \right) =
     \left(
                \begin{array}{c}
                \nabla_{\blacklozenge} (\eta_{-1})^{\alpha} +4 \eta_{-2} \mathcal{L}^{\gamma \alpha} \sfP_{\blacklozenge\gamma}-2 \sfP_{\blacklozenge\blacklozenge} \mathcal{L}^{\alpha \gamma}(\eta_{-3})_{\gamma}\\
          \nabla_{\blacklozenge}  \eta_{-2} +4 \mathcal{L}^{\gamma \alpha} \sfP_{\blacklozenge\gamma} (\eta_{-3})_{\alpha} \\
         \nabla_{\blacklozenge}   (\eta_{-3})_{\alpha} -\frac{1}{4} \mathcal{L}_{\beta \alpha} (\eta_{-1})^{\beta}\\
                    \end{array}
        \right)
\]

\[\wt\nabla^{\beta}
    \left(
        \begin{array}{c}
            (\eta_{-1})^{\alpha} \\
            \eta_{-2} \\
            (\eta_{-3})_{\alpha} \\
        \end{array}
    \right) =
     \left(
        \begin{array}{c}
             \nabla^{\beta}   (\eta_{-1})^{\alpha}  +4 \eta_{-2} \mathcal{L}^{\gamma \alpha} {\sfP^{\beta}}_{\gamma}-2\sfP^{\beta}{}_{\blacklozenge} \mathcal{L}^{\alpha \gamma}(\eta_{-3})_{\gamma}\\
            \nabla^{\beta} \eta_{-2}+4 \mathcal{L}^{\gamma \alpha} {\sfP^{\beta}}_{\gamma} (\eta_{-3})_{\alpha}  \\
          \nabla^{\beta}  (\eta_{-3})_{\alpha} \\
               \end{array}
    \right)
\]
\item The Rho tensor  $\sfP\stackrel{\theta}{=}(\sfP_{1},\sfP_2,\sfP_3)\in\Omega^1(M,T^*M)$ determined by $\theta$ and the conformal Schouten tensor $\wt\sfP\stackrel{\theta}{=}(\wt\sfP_{1},\wt\sfP_2,\wt\sfP_3)\in\Omega^1(M,T^*M)$ of $g_{\theta}$ are related by
\[\sfP_1=-\wt\sfP_1,\quad\sfP_2=-\tfrac{1}{2}\wt\sfP_2,\quad \sfP_3=-\tfrac{1}{3}\wt\sfP_3.\footnote{The negative here is a consequence of the fact that the usual convention for the conformal Schouten tensor (see, e.g., \cite{BEG}) differs in sign from the specialization to conformal geometry of the general definition of Rho tensor in \cite{CapSlovak}.}\]
\end{enumerate}
\end{proposition}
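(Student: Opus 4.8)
The plan is to prove both parts simultaneously, by comparing two coordinate expressions for the single normal tractor connection $\nabla^{\mcV}$. By \cite[Proposition 4]{HammerlSagerschnig} the tractor bundle $\mcV$ and its normal connection are identified with the standard tractor bundle and normal tractor connection of the induced conformal structure $\mbc_{\mbD}$. The fixed scale $\theta$ is at once a conformal scale of weight $1$ and a $(2,3,5)$ scale, so it induces both the conformal splitting $\mcV \cong \mcE[1] \oplus TM[-1] \oplus \mcE[-1]$ and the finer $(2,3,5)$ splitting \eqref{comp_splitted}. The first step is to check that these splittings are compatible: decomposing the conformal middle slot $TM[-1]$ by the $\theta$-soldering $(\pi_{-3}, \pi_{-2}, \pi_{-1})$ of $TM$ recovers $\gr_{-3}(TM)[-1] \oplus \gr_{-2}(TM)[-1] \oplus \gr_{-1}(TM)[-1]$, while the two density slots coincide up to the reversal of top and bottom dictated by the two sign conventions for the tractor filtration. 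Making this identification precise requires only the metric \eqref{metric} and the normalizations of $\mcL^{\alpha\beta}$ and $\mcL^{\bar\alpha\beta}$ fixed in Subsection \ref{subsection:adapted-index-notation}.

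With the splittings matched, I would write $\nabla^{\mcV}$ in the standard conformal form (see \cite{CapSlovak, BEG}): the Levi-Civita connection $\wt\nabla$ occupies the middle slot, the metric $g_{\theta}$ couples the weight $-1$ density into the middle slot, and the conformal Schouten tensor $\wt\sfP$ couples the weight $1$ density into the middle slot and the middle slot into the weight $-1$ density. Under the identification of the previous paragraph the weight $1$ density is the bottom slot $\sigma$ and the weight $-1$ density is the top slot $\chi$ of \eqref{comp_splitted}. Since this is the same connection in the same splitting as the one computed in Proposition \ref{proposition:splitting}(2), the two coordinate formulas must agree slot by slot.

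Part (1) then follows by reading off the middle-to-middle block, that is, the part of $\nabla^{\mcV}$ that sends the middle-slot component of a tractor to the middle-slot component of its derivative. In the conformal form this block is exactly $\wt\nabla$ (the terms $g_{\theta} \chi$ and $\wt\sfP \sigma$ depend on the density slots, not on the middle slot), whereas Proposition \ref{proposition:splitting}(2) expresses the same block in the graded frame as $\nabla$ together with the algebraic $\mcL$- and $\sfP$-terms displayed in the three formulas of part (1); the occurrence of $\sfP$ merely records that $\wt\nabla$ does not preserve the grading. Part (2) follows from the remaining, grade-shifting couplings: matching the terms coupling the bottom density $\sigma$ into the middle slot, namely $-\sigma \sfP_{\beta \alpha}$, $+\sigma \sfP_{\beta \blacklozenge}$, and $-\sigma \sfP_{\beta}{}^{\alpha}$ and their counterparts for the $\nabla_{\blacklozenge}$ and $\nabla^{\beta}$ operators, against the single conformal coupling $\wt\sfP \sigma$, and using \eqref{metric} to resolve the metric pairings into graded-index pairings, identifies the $\gr_i(T^*M)$-valued component of $\sfP$ with a multiple of that of $\wt\sfP$. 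The relative factors $1, \tfrac{1}{2}, \tfrac{1}{3}$ are produced by the homogeneity-dependent normalizations of the pairing \eqref{equation:Rho-components} relative to the plain metric pairing \eqref{metric}, and the overall sign is the convention discrepancy between the conformal Schouten tensor and the general Rho tensor noted in the footnote to part (2); assembling the components gives $\sfP_1 = -\wt\sfP_1$, $\sfP_2 = -\tfrac{1}{2}\wt\sfP_2$, $\sfP_3 = -\tfrac{1}{3}\wt\sfP_3$.

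I expect the main obstacle to be bookkeeping rather than anything conceptual: one must carry every normalization constant through the comparison---the signs of \eqref{metric}, the homogeneity-weighted coefficients of \eqref{equation:Rho-components}, the normalizations of $\mcL$, and the top-bottom reversal of the two filtrations---so that the coefficients in part (1) and the factors in part (2) emerge exactly. As an independent confirmation of part (1) that does not invoke the conformal identification, I would verify directly that the connection defined by the displayed formulas is torsion-free and preserves $g_{\theta}$; by uniqueness of the Levi-Civita connection it must then equal $\wt\nabla$. This check uses only the torsion of the Weyl connection $\nabla$ and the failure of $\nabla$ to preserve $g_{\theta}$, both of which are computable from the Weyl-structure data of Subsection \ref{subsection:Weyl-structures}.
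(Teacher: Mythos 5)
Your proposal is correct and follows essentially the same route as the paper: the paper's proof likewise invokes \cite[Proposition 4]{HammerlSagerschnig} to identify $\nabla^{\mcV}$ with the normal conformal tractor connection of $\mbc_{\mbD}$ and then compares the two formulae for this one connection in the splitting determined by $\theta$, which is exactly your slot-by-slot matching. Your extra check that the displayed connection is torsion-free and preserves $g_{\theta}$ is a reasonable safeguard but not part of the paper's (much terser) argument.
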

\begin{proof}
The tractor connection $\nabla^{\mathcal{V}}$ induced by the regular, normal Cartan geometry determined by the $(2,3,5)$ distribution $\mbD$ coincides with the normal conformal tractor connection of the associated conformal structure $\mathbf{c}_{\mbD}$, see \cite[Proposition 4]{HammerlSagerschnig}. The result then follows from comparing the formulae for the two tractor connections in the splitting determined by the scale $\theta$. 
\end{proof}

Part (2) of Proposition \ref{proposition:Levi-Civita} explains the choice of coefficients in \eqref{equation:Rho-components}. Let $\iota^a{}_{\alpha}$, $\iota^a{}_{\blacklozenge}$, $\iota^{a \bar\beta}$ denote the inclusions into $TM$ of the subspaces respectively identified with the associated graded bundles $\gr_{-1}(TM)$, $\gr_{-2}(TM)$, $\gr_{-3}(TM)$ by the scale $\theta$. Then, the quantities $\smash{\sfP_{\alpha \beta}, \sfP_{\alpha \blacklozenge}, \sfP_{\blacklozenge \alpha}, \sfP_{\alpha}{}^{\bar\beta}, \sfP_{\blacklozenge\blacklozenge}, \sfP^{\bar\alpha}{}_{\beta}, \sfP_{\blacklozenge}{}^{\bar\beta}, \sfP^{\bar\alpha}{}_{\blacklozenge}, \sfP^{\bar\alpha \bar\beta}}$ coincide with (the negatives of) the corresponding restrictions of $\wt{\sfP}$:
\begin{alignat*}{6}
	\sfP_{\alpha \beta} 
		&= - \iota^a{}_{\alpha} \iota^b{}_{\beta} \wt{\sfP}_{ab} , &\qquad
	\sfP_{\alpha \blacklozenge} = \sfP_{\blacklozenge \alpha}
		&= - \iota^a{}_{\alpha} \iota^b{}_{\blacklozenge} \wt{\sfP}_{ab} , &\qquad
	\sfP_{\alpha \bar\beta} = \sfP^{\bar\beta}{}_{\alpha}
		&= - \iota^a{}_{\alpha} \iota^{b \bar\beta} \wt{\sfP}_{ab} , \\
	\sfP_{\blacklozenge\blacklozenge}
		&= -\iota^a{}_{\blacklozenge} \iota^b{}_{\blacklozenge} \wt{\sfP}_{ab} , &\qquad
	\sfP_{\blacklozenge}{}^{\bar\alpha} = \sfP^{\bar\alpha}{}_{\blacklozenge}
		&= -\iota^a{}_{\blacklozenge} \iota^{b \bar\alpha} \wt{\sfP}_{ab} , &\qquad
	\sfP^{\bar\alpha \bar\beta}
		&= -\iota^{a \bar\alpha} \iota^{b \bar\beta} \wt{\sfP}_{ab} .
\end{alignat*}
The symmetry $\wt{\sfP}_{ab} = \wt{\sfP}_{ba}$ of the conformal Schouten tensor implies the identities
$\sfP_{\alpha            \blacklozenge} = \sfP_{\blacklozenge     \alpha      }$,
$\sfP_{\alpha}       {}^{\bar\beta    } = \sfP^{\bar\beta    }{}_{\alpha      }$,
$\sfP_{\blacklozenge}{}^{\bar\alpha   } = \sfP^{\bar\alpha   }{}_{\blacklozenge}$
recorded above as well as the symmetry $\sfP_{\alpha \beta} = \sfP_{\beta \alpha}$ mentioned after Proposition \ref{proposition:BGG} and the symmetry $\smash{\sfP^{\bar\alpha \bar\beta} = \sfP^{\bar\beta \bar\alpha}}$. The first and third of the former identities together show that one could, without ambiguity, suppress the lozenge indices in these symbols, that is, simply denote $\sfP_{\alpha} := \sfP_{\alpha \blacklozenge} = \sfP_{\blacklozenge \alpha}$, $\sfP := \sfP_{\blacklozenge\blacklozenge}$, $\smash{\sfP^{\bar\alpha} := \sfP_{\blacklozenge}{}^{\bar\alpha} = \sfP^{\bar\alpha}{}_{\blacklozenge}}$.

We are now prepared to prove the conformal formula for $\Theta_0$ given in the introduction.

\begin{proof}[Proof of Proposition \ref{proposition:pullback}]
Since $\mbD$ is totally isotropic with respect the induced conformal structure $\mbc_{\mbD}$ \cite{HammerlSagerschnig}, the restriction of the trace part of $\wt\Theta_0(\sigma)$ to $\mbD$ is zero, and hence (consulting \eqref{equation:BGG-conformal}) $\iota^* \wt\Theta_0(\sigma) = \iota^*(\wt\nabla^2 \sigma + \wt\sfP \sigma)$; note the quantity in parentheses is $\wt\nabla^2 \sigma + \wt\sfP \sigma$, not just its tracefree part. By the previous paragraph, we have $(\iota^* \wt\sfP)_{\alpha\beta} = -\sfP_{\alpha\beta}$, so comparing this formula for $\iota^*\wt\Theta_0$ with the formula \eqref{equation:BGG} for $\Theta_0$ shows that the claim is equivalent to a particular compatibility between the Hessians of $\nabla$ and $\wt\nabla$, namely that $\iota^a{}_{\alpha} \iota^b{}_{\beta} \wt\nabla_a \wt\nabla_b \sigma = \nabla_{(\alpha} \nabla_{\beta)} \sigma$. Using Proposition \ref{proposition:Levi-Civita}(1) to compute expressions for components of the connection dual to $\wt\nabla$ gives that
\[
	\iota^a{}_{\alpha} \iota^b{}_{\beta} \wt\nabla_a \wt\nabla_b \sigma + \tfrac{1}{2} \iota^c{}_{\blacklozenge} \wt\nabla_c \sigma \mcL_{\alpha \beta}
		= \nabla_{\alpha} \nabla_{\beta} \sigma .
\]
Since $\mcL_{\alpha \beta}$ is skew, symmetrizing yields the desired compatibility.
\end{proof}

\begin{remark}[Induced conformal Killing fields]
Note that the two lowest-homogeneity components of $\mcV$, namely $\mcE[1]$ and $\gr_{-3}(TM) [-1] = (TM / [\mbD, \mbD]) [-1] \cong \mbD$ are the second- and third-lowest--homogeneity components of $TM$, equivalently, the components of $[\mbD, \mbD]$. This suggests that one can build a natural, first-order differential operator $\iota_7: \Gamma(\mcE[1]) \to \Gamma([\mcD, \mbD]) \subset \Gamma(TM)$ by truncating $L_0$ after the second-lowest--homogeneity component and interpreting the result as a vector field in $[\mbD, \mbD]$:
\[
	\iota_7 :
		\sigma
			\mapsto
		\left(
			\begin{array}{c}
				\mathcal{L}^{\beta \alpha} \nabla_{\beta} \sigma \\
				\sigma \\
				0
			\end{array}
		\right)
			\in
		\Gamma
		\left(
			\begin{array}{c}
				\mbD \\%
				{}[\mbD, \mbD] / \mbD \\
				TM / [\mbD, \mbD]
			\end{array}
		\right)
		\cong
		\Gamma(TM) .
\]
One can verify that this operator is indeed well-defined by computing its transformation under a change of scale, but one can also derive it in a manifestly invariant way from the $\mfg_2$-module inclusion $\bbV \hookrightarrow \mfg_2 \oplus \bbV \cong \mfso(3, 4)$ using BGG splitting and projection operators \cite[Theorem B]{HammerlSagerschnig}. If $\sigma$ is a nonzero almost Einstein scale, then $\iota_7(\sigma)$ is a conformal Killing field of $\mbc_{\mbD}$ but its flow does not preserve the distribution $\mbD$, and all conformal Killing fields of $\mbc_{\mbD}$ that are sections of $[\mbD, \mbD]$ arise this way. In particular, flowing the distribution $\mbD$ along $\iota_7(\sigma)$ yields a $1$-parameter family of distinct distributions that all induce the same conformal structure $\mbD$, relating the almost Einstein scales to the so-called conformal isometry problem for $(2, 3, 5)$ distributions; see \cite[\S\S3, 4]{SagerschnigWillse} for much more.
\end{remark}

\section{Monge normal form}
\label{section:Monge-normal-form}

One way to construct $(2, 3, 5)$ distributions is via ordinary differential equations of the form
\begin{equation}\label{equation:ode}
	z'(x) = F(x, y(x), y'(x), y''(x), z(x)) :
\end{equation}
We may prolong any solution $(x, y(x), z(x))$ to a curve $(x, y(x), y'(x), y''(x), z(x))$ in the jet space $J := J^{2, 0}(\bbR, \bbR^2) \cong \bbR^5$, and by construction this curve will be tangent to the $2$-plane distribution $\mbD_F \subset TJ$ defined in respective coordinates $(x, y, p, q, z)$ as the common kernel of the canonical jet $1$-forms $dy - p \,dx$ and $dp - q \,dx$ and the $1$-form $dz - F(x, y, p, q, z) dx$. Conversely, the projection of any integral curve of this distribution (to which the pullback of $dx$ is nowhere zero) to $xyz$-space defines a solution of the o.d.e. \eqref{equation:ode}.

The distribution $\mbD_F$ is spanned by the coordinate vector field
\[
	\mbQ := \partial_q
\]
and the total derivative
\[
	\mbX := \partial_x + p \partial_y + q \partial_p + F(x, y, p, q, z) \partial_z ,
\] and computing directly shows that $\mbD_F$ is a $(2, 3, 5)$ distribution iff $\mbQ^2 F = F_{qq}$ is nowhere zero.
In fact, every $(2, 3, 5)$ distribution locally arises this way, and hence we say that a $(2, 3, 5)$ distribution $\mbD_F$ is in \textit{Monge normal form}.

\begin{proposition}\cite[\S76]{Goursat}
Let $(M, \mbD)$ be a $(2, 3, 5)$ distribution and fix a point $u \in M$. There is a neighborhood $U \subset M$ of $u$, a diffeomorphism $h: U \to h(U) \subset J$ for which $h(u) = 0$, and a smooth function $F : h(U) \to \Bbb R$ for which $Th \cdot \mbD\vert_U = \mbD_F$.
\end{proposition}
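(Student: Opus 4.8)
The plan is to pass to the dual (Pfaffian) picture and build adapted coordinates by a sequence of Frobenius and Darboux normalizations governed by the derived flag. Write $I := \mbD^{\perp} \subset T^*M$ for the rank-$3$ annihilator of $\mbD$, so that $\mbD = \ker I$, and observe that recovering the model amounts to producing functions $x, y, p, q, z$ near $u$, all vanishing at $u$, with
\[
	I = \langle dy - p\,dx,\ dp - q\,dx,\ dz - F\,dx \rangle
\]
for some smooth $F$; then $h := (x, y, p, q, z)$ is the desired diffeomorphism and $Th \cdot \mbD\vert_U = \mbD_F$. First I would translate the growth conditions into statements about the derived flag $I \supset I^{(1)} \supset I^{(2)} \supset \cdots$, where $I^{(k+1)}$ consists of the sections $\omega$ of $I^{(k)}$ with $d\omega \equiv 0$ modulo $I^{(k)}$. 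Since $\mbD \subset [\mbD, \mbD]$, one has $I^{(1)} = [\mbD, \mbD]^{\perp}$, and the Levi bracket isomorphisms of \S\ref{section:geometry} (which give $\operatorname{rank}[\mbD, \mbD] = 3$ and $[\mbD, [\mbD, \mbD]] = TM$) show that the derived flag has ranks $3, 2, 0$: the rank drops directly from $2$ to $0$, and this ``skip'' is precisely the analytic signature of the $(2, 3, 5)$ condition.

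Next I would construct the coordinate functions in the order dictated by the contact tower of the model. The rank-$2$ derived system $I^{(1)}$ has vanishing derived system, so applying Pfaff's normal form (Darboux's theorem) to a suitable generator extracts an independent variable $x$ together with a first contact pair $(y, p)$ for which $dy - p\,dx \in I$. Prolonging this relation --- differentiating it and matching it against the structure equations of the derived flag --- produces the next level of the tower, a function $q$ with $dp - q\,dx \in I$. Finally, once the contributions of these two forms are absorbed, the remaining generator of $I$ can be brought to the form $dz - F\,dx$ for a unique smooth function $F$ of $(x, y, p, q, z)$, and the nondegeneracy encoded in $\operatorname{rank} I^{(1)} = 2$ translates into $F_{qq} \neq 0$ --- exactly the criterion recorded just before the proposition for $\mbD_F$ to be a $(2, 3, 5)$ distribution, matching the bracket computation there via $\mbQ = \partial_q$ and $\mbX$. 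Shrinking $U$ if necessary guarantees $F_{qq} \neq 0$ throughout $h(U)$.

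The main obstacle is the coherence of these iterated normalizations: each application of Frobenius or Darboux is available in principle, but one must choose $x$ and the successive functions so that the relations $dy - p\,dx \in I$ and $dp - q\,dx \in I$ hold exactly (not merely modulo lower-order terms that would spoil the jet structure) and so that the tower closes into genuine prolongation coordinates. Controlling this amounts to tracking the structure equations $d\beta$ for a frame $\beta$ of the derived flag at each stage and using the nondegeneracy to rule out degenerate branches, which is precisely where the $(2, 3, 5)$ hypothesis enters beyond fixing the derived-flag ranks. This is the content of Cartan's and Goursat's classical analysis cited in the statement; since the argument is local and smooth, the smooth Frobenius and Darboux theorems suffice throughout.
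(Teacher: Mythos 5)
The paper itself offers no proof of this proposition; it is quoted verbatim from Goursat with the citation \cite[\S76]{Goursat} standing in for the argument. So the only question is whether your sketch of the classical Pfaffian-system proof would actually go through. The overall strategy is the right one (pass to $I = \mbD^{\perp}$, use the derived flag of ranks $3, 2, 0$, build the contact tower $dy - p\,dx$, $dp - q\,dx$, $dz - F\,dx$), and the later steps you describe are essentially correct: once one has $dy - p\,dx \in I^{(1)}$, the identity $d(dy - p\,dx) = -dp \wedge dx \equiv 0 \bmod I$ forces a combination $dp - q\,dx$ into $I$, and the last generator is normalized using a first integral of the line field $\mbD \cap \ker dx$. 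But the pivotal step is wrong as stated: you claim that ``applying Pfaff's normal form (Darboux's theorem) to a suitable generator'' of the rank-$2$ system $I^{(1)}$ extracts a pair $(y, p)$ with $dy - p\,dx \in I$. A generic generator of $I^{(1)}$ has class $5$ (its Darboux normal form is $dz - p_1\,dx_1 - p_2\,dx_2$, a contact form), which does not start the Monge tower; what is needed is a generator of class exactly $3$, i.e.\ one satisfying $\omega \wedge (d\omega)^2 = 0 \neq \omega \wedge d\omega$. In the flat model every direction of $I^{(1)}$ has this property, but in general the class-$\le 3$ directions in each fiber $\mathbb{P}(I^{(1)}_x) \cong \mathbb{RP}^1$ form the zero locus of a binary cubic: a real root always exists pointwise, but pointwise solvability of a cubic does not yield a smooth local section (think of $a^3 - t b^3$ near $t = 0$), so the existence of a smoothly varying class-$3$ generator requires a genuine argument. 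That argument is precisely the substantive content of Goursat's \S76 (and of von Weber's earlier work), so your sketch defers exactly the step that constitutes the theorem.

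Two smaller points. First, the nondegeneracy that translates into $F_{qq} \neq 0$ is $I^{(2)} = 0$, not $\operatorname{rank} I^{(1)} = 2$: for every $F$ the distribution $\mbD_F$ satisfies $\operatorname{rank}[\mbD_F, \mbD_F] = 3$, and it is only the last step of the growth vector that sees $F_{qq}$. Second, at the end one must also verify that $x, y, p, q, z$ are functionally independent (so that $h = (x,y,p,q,z)$ is a diffeomorphism onto its image); you gesture at this under ``coherence of the iterated normalizations,'' and it does follow from the nondegeneracy, but it is a check that has to be made at each stage rather than once at the end.
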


We record some formulae here for a distribution $\mbD_F$ in Monge normal form. In all cases we compute with respect to the scale $\mbQ \wedge \mbX \in \smash{\Gamma(\bigwedge^2 \mbD)} = \Gamma(\mcE[1])$, for which many expressions simplify.

The generalised Reeb field $R$ is
\[
	R = [\mbQ, \mbX] + \left(\frac{\mbX \mbQ^2 F}{\mbQ^2 F} - F_z\right) \mbQ - \frac{\mbQ^3 F}{\mbQ^2 F} \mbX .
\]
The partial connection $\nabla : \Gamma(\mbD) \times \Gamma(\mbD) \to \Gamma(\mbD)$ is characterised by
\begin{alignat*}{3}
	              \nabla_{\mbQ} \mbQ &= 0
	&\qquad\qquad \nabla_{\mbX} \mbQ &= \left(\frac{\mbX \mbQ^2 F}{\mbQ^2 F} - F_z\right) \mbQ - \frac{\mbQ^3 F}{\mbQ^2 F} \mbX \\
	              \nabla_{\mbQ} \mbX &= 0
	&             \nabla_{\mbX} \mbX &= \frac{(\mbQ \mbX^2 - 3 \mbX [\mbQ, \mbX]) F}{\mbQ^2 F} \mbQ - \left(\frac{\mbX \mbQ^2 F}{\mbQ^2 F} - F_z\right) \mbX ,
\end{alignat*}
and the dual partial connection $\nabla : \Gamma(\mbD) \times \Gamma(\mbD^*) \to \Gamma(\mbD^*)$ is characterised (in the coframe $(\eta_{\mbQ}, \eta_{\mbX})$ of $\mbD$ dual to $(\mbQ, \mbX)$) by
\begin{alignat*}{3}
	              \nabla_{\mbQ} \eta_{\mbQ} &= 0
	&\qquad\qquad \nabla_{\mbX} \eta_{\mbQ} &= -\left(\frac{\mbX\mbQ^2 F}{\mbQ^2 F} - F_z\right) \eta_{\mbQ} - \frac{(\mbQ \mbX^2 - 3 \mbX [\mbQ, \mbX]) F}{\mbQ^2 F} \eta_{\mbX} \\
	              \nabla_{\mbQ} \eta_{\mbX} &= 0
	&             \nabla_{\mbX} \eta_{\mbX} &= \frac{\mbQ^3 F}{\mbQ^2 F} \eta_{\mbQ} + \left(\frac{\mbX \mbQ^2 F}{\mbQ^2 F} - F_z\right) \eta_{\mbX} .
\end{alignat*}

Using \eqref{RhoT}
we can now compute the first BGG operator $\Theta_0$ with respect to the frame $(\mbQ, \mbX)$. For any $(2, 3, 5)$ distribution in Monge normal form and any section $\sigma \in \Gamma(\mcE[1])$, let $\ul\sigma$ be its trivialization with respect to the scale $\mbQ \wedge \mbX$, that is, the unique function satisfying $\sigma = \ul\sigma \mbQ \wedge \mbX$.


\begin{proposition}
Let $F$ be a function on an open subset on $J$ for which $F_{qq}$ vanishes nowhere and $\mbD_F$ the distribution it determines. The components of $\Theta_0$ with respect to the frame $(\mbQ, \mbX)$ of $\mbD$ are:
\begin{align}
	\Theta_0(\sigma)(\mbQ, \mbQ)
		&= \bigg[\underbrace{\mbQ^2 \ul\sigma}_{(\Sym \nabla^2 \ul\sigma)(\mbQ, \mbQ)} - \underbrace{\frac{1}{10} \left(-\frac{3 \mbQ^4 F}{\mbQ^2 F} + \frac{4 (\mbQ^3 F)^2}{(\mbQ^2 F)^2} \right)}_{\sfP(\mbQ, \mbQ)} \ul\sigma \bigg] \otimes (\mbQ \wedge \mbX) \label{equation:DQQ} \\
	\Theta_0(\sigma)(\mbQ, \mbX)
		&= \bigg[\underbrace{\frac{1}{2}\bigg(
				\mbQ \mbX
				+ \mbX \mbQ
				- \left(\frac{\mbX \mbQ^2 F}{\mbQ^2 F} - F_z\right) \mbQ
				+ \frac{\mbQ^3 F}{\mbQ^2 F} \mbX
				\bigg) \ul\sigma}_{(\Sym \nabla^2 \sigma)(\mbQ, \mbX)} \label{equation:DQX} \\
		&\phantom{\bigg[}\qquad - \frac{1}{10} \left(\frac{(-2 \mbQ \mbX \mbQ^2 - \mbX \mbQ^3) F}{\mbQ^2 F}\right. \notag \\
		&\phantom{\bigg[\qquad-}\underbrace{\phantom{\frac{1}{10}\bigg(}\qquad\left.
			+ \frac{4 (\mbQ^3 F) (\mbX \mbQ^2 F)}{(\mbQ^2 F)^2} - \frac{(\mbQ^3 F) F_z}{\mbQ^2 F} + 2 \mbQ \cdot F_z \right)}_{\sfP(\mbQ, \mbX)} \ul\sigma \bigg] \otimes (\mbQ \wedge \mbX) \notag \\
	\Theta_0(\sigma)(\mbX, \mbX)
		&= \bigg[
			\underbrace{\left(\mbX^2 - \frac{(\mbQ \mbX^2 - 3 \mbX [\mbQ, \mbX]) F}{\mbQ^2 F} \mbQ + \left(\frac{\mbX \mbQ^2 F}{\mbQ^2 F} - F_z\right) \mbX\right) \ul\sigma}_{(\Sym \nabla^2 \sigma)(\mbX, \mbX)} \label{equation:DXX} \\
		&\phantom{\bigg[}\qquad - \frac{1}{10}\left(\frac{(-\mbQ^2 \mbX^2 + 3 \mbQ \mbX [\mbQ, \mbX] + 2 \mbX \mbQ^2 \mbX - 4 \mbX \mbQ \mbX \mbQ) F}{\mbQ^2 F}\right. \notag \\
		&\phantom{\bigg[\qquad-}\underbrace{\phantom{\frac{1}{10}\bigg(}\qquad\left.
				+ \frac{(\mbQ^3 F)(3 \mbQ \mbX^2 - 9 \mbX [\mbQ, \mbX]) F}{(\mbQ^2 F)^2}
				+ \frac{(\mbX \mbQ^2 F)^2}{(\mbQ^2 F)^2} - F_z^2\right)}_{\sfP(\mbX, \mbX)} \ul\sigma \bigg] \otimes (\mbQ \wedge \mbX) \notag .
\end{align}
\end{proposition}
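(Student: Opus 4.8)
The plan is to evaluate the abstract formula \eqref{equation:BGG}, $\Theta_0(\sigma)_{\alpha\beta} = \nabla_{(\alpha}\nabla_{\beta)}\sigma - \sfP_{(\alpha\beta)}\sigma$, in the frame $(\mbQ, \mbX)$ and the scale $\theta := \mbQ \wedge \mbX$. Since $\sfP_{\alpha\beta}$ is symmetric (as noted after Proposition \ref{proposition:BGG}), $\sfP_{(\alpha\beta)} = \sfP_{\alpha\beta}$, so it suffices to produce separately the symmetrized Hessian $\Sym \nabla^2 \sigma$ and the lowest-homogeneity Rho component $\sfP_{\alpha\beta}$, each evaluated on the three pairs $(\mbQ,\mbQ)$, $(\mbQ,\mbX)$, $(\mbX,\mbX)$; adding them with the sign dictated by \eqref{equation:BGG} gives the three displayed formulas. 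I would organize the computation around these two pieces.

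For the Hessian, the key simplification is that the scale $\theta = \mbQ\wedge\mbX$ is by construction $\nabla$-parallel, so for $\sigma = \ul\sigma\, \theta$ every covariant derivative reduces to a directional derivative of $\ul\sigma$ together with the connection corrections already tabulated above. Concretely, $(\nabla^2\sigma)(\xi,\eta) = \nabla_\xi\nabla_\eta\sigma - \nabla_{\nabla_\xi\eta}\sigma$ for $\xi,\eta \in \{\mbQ,\mbX\}$, and substituting the explicit values of $\nabla_{\mbQ}\mbQ$, $\nabla_{\mbQ}\mbX$, $\nabla_{\mbX}\mbQ$, $\nabla_{\mbX}\mbX$ immediately yields the terms underbraced as $(\Sym\nabla^2\ul\sigma)(\cdot,\cdot)$ in \eqref{equation:DQQ}--\eqref{equation:DXX}; for the mixed pair one symmetrizes $\nabla_{\mbQ}\nabla_{\mbX}$ and $\nabla_{\mbX}\nabla_{\mbQ}$. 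This step is routine bookkeeping.

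The substantive part is the explicit evaluation of $\sfP_{\alpha\beta}$ via \eqref{RhoT}, $\sfP(\xi)(\eta) = \tfrac15\,\mathrm{d}\alpha(\Psi_1(\xi) - \Psi_2(\xi), \eta)$. I would first take $R$ from the displayed formula and observe that the defining conditions for $\alpha$ ($\alpha(R)=1$, $\alpha|_{\mbD}=0$, and $\alpha([\xi,R])=0$ for $\xi\in\Gamma(\mbD)$) identify $\alpha$ as precisely the coframe $1$-form dual to $R$ in the filtration-adapted frame $\{\mbQ,\mbX,R,[\mbQ,R],[\mbX,R]\}$ of $TM$ (that these five fields form a frame follows from the $(2,3,5)$ condition $\mbQ^2 F\ne 0$). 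With $\alpha$ in hand I would compute $\mathrm{d}\alpha$; compute $\Psi_1(\mbQ),\Psi_1(\mbX)$ by inverting the isomorphism $\mbD\to\gr_{-3}(TM)$, $\gamma\mapsto q_{-3}([\gamma,R])$, applied to $\tfrac12 q_{-3}([R,[\gamma,R]])$; and compute $\Psi_2(\mbQ),\Psi_2(\mbX)$ directly from $\Psi_2(\gamma)=\alpha([\mbX,[\gamma,R]])\mbQ - \alpha([\mbQ,[\gamma,R]])\mbX$ (taking $\xi=\mbQ$, $\eta=\mbX$, which satisfy $\mcL(\mbQ,\mbX)=\theta$). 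Substituting into \eqref{RhoT} produces the rational expressions in derivatives of $F$ underbraced as $\sfP(\cdot,\cdot)$, with the coefficient $\tfrac{1}{10}=\tfrac15\cdot\tfrac12$ arising from the $\tfrac15$ in \eqref{RhoT} together with the $\tfrac12$ in the definition of $\Psi_1$.

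The main obstacle is this Rho computation: it requires iterated Lie brackets of $\mbQ$ and $\mbX$ to fourth order, hence derivatives of $F$ up to order four, and both the determination of $\alpha$ and the inversion defining $\Psi_1$ introduce division by $\mbQ^2 F$ and lengthy intermediate expressions. Collapsing these to the compact forms displayed is where a computer algebra system is essential; I would carry out exactly these steps in Maple's \texttt{DifferentialGeometry} package, as the authors indicate, and then combine the Hessian and Rho pieces according to \eqref{equation:BGG} to obtain \eqref{equation:DQQ}--\eqref{equation:DXX}.
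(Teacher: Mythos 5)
Your proposal is correct and follows essentially the same route as the paper, which offers no proof beyond the preceding formulae for $R$, the partial connection, and \eqref{RhoT}, together with the remark that the Section \ref{section:Monge-normal-form} formulae were obtained with Maple's \texttt{DifferentialGeometry} package; your reconstruction of $\alpha$, $\Psi_1$, $\Psi_2$ from their defining properties and the assembly via Proposition \ref{proposition:BGG} is exactly what is required. One immaterial quibble: attributing the factor $\tfrac{1}{10}=\tfrac15\cdot\tfrac12$ to the $\tfrac12$ in the definition of $\Psi_1$ cannot be right as stated, since that $\tfrac12$ does not multiply the $\Psi_2$ contribution and yet the $\tfrac{1}{10}$ prefixes the entire $\sfP$ expression; the common $\tfrac12$ comes rather from the convention for $\mathrm{d}\alpha$, but the coefficients fall out of the computation in any case.
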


\section{Examples}
\label{section:examples}

In this section, we give several examples and a nonexample.

For each example, we record an additional invariant associated to any $(2, 3, 5)$ distribution $M$. Since we may regard the harmonic curvature for a geometry of this type as a section $\smash{A \in \Gamma(\bigodot^4 \mbD^*)}$, that is, as a field of binary quartic forms (this was essentially observed in \cite{CartanFiveVariables}), we can regard at each point $u \in M$ the complexification $A_u \otimes \Bbb C$ as a homogeneous quartic polynomial on $\mbD_u \otimes \bbC$. If $A_u \neq 0$, $A_u \otimes \Bbb C$ has four roots in $\bbP(\mbD_u \otimes \bbC)$ counting multiplicity, and so the multiplicities of the roots determine a partition of $4$ called the \textit{root type} of $\mbD$ at $u$; if $A_u = 0$, by convention we say that $\mbD$ has root type $\infty$ at $u$. This is analogous to the well-known notion of Petrov type in $4$-dimensional Lorentzian conformal geometry. If the root type $r_u$ of $\mbD$ at $u$ is the same for all $u \in M$, we say that $\mbD$ has \textit{constant root type} $r_u$.

We give examples of almost Einstein $(2, 3, 5)$ distributions of constant root types $\infty$, $[4]$, $[3, 1]$, and $[2, 2]$. In particular, no example of root type $[3, 1]$ seems to have been known previously, and an example of root type $[2, 2]$ was given only recently \cite{SagerschnigWillse} (see Example \ref{example:rolling-distribution}). Whether an almost Einstein $(2, 3, 5)$ distributions can have at any point either of the remaining root types, namely $[2, 1, 1]$ and $[1, 1, 1, 1]$, remains open.

\begin{example}[The flat model]\label{example:flat-model}
For the flat $(2, 3, 5)$ distribution $(\G_2 / P, \Delta)$ \cite{Sagerschnig}, which, since the harmonic curvature vanishes identically, has constant root type $\infty$, the standard tractor bundle is $\mcV = \G_2 \times_P \bbV \to \G_2 / P$, the holonomy of $\nabla^{\mcV}$ is trivial, and the parallel sections of $\mcV$ are the sections $t_v : gP \to \llbracket g, g^{-1} \cdot v \rrbracket$, $v \in \bbV$. Thus, (1) the space of almost Einstein scales of $\Delta$ is $\ker \Theta_0 = \{\Pi_0(t_v) : v \in \Bbb V\} \cong \bbV$ and has dimension $7$, and (2) this is the maximal possible dimension of $\ker \Theta_0$ for any $(2, 3, 5)$ distribution. Conversely, if for some $(2, 3, 5)$ distribution we have $\dim \ker \Theta_0 = 7$, then that distribution is flat.\footnote{The next-largest realizable value for $\dim \ker \Theta_0$ is unknown. Example \ref{example:Fq} shows that it is at least $2$, and it follows from the fact that the largest proper subalgebra of the split octonions has dimension $6$ (any such algebra is isomorphic to the sextonion algebra) \cite{Westbury} that it is no larger than $5$.}

A simple Monge normal form for $\Delta$ is $\mbD_F$, $F(x, y, p, q, z) = q^2$, for which the BGG equation $\Theta_0(\sigma) = 0$ simplifies dramatically to $\mbQ^2 \ul\sigma = \frac{1}{2} (\mbQ \mbX + \mbX \mbQ) \ul\sigma = \mbX^2 \ul\sigma = 0$. Solving gives $\ker \Theta_0$ in jet space coordinates:
\[
	\ker \Theta_0 = \langle
				2 x p q - 6 y q + 4 p^2 - 3 x z,
				2 p q - 3 z,
				x^2 q - 4 x p + 6 y,
				x q - 2 p,
				q,
				x,
				1
			 \rangle
			 \otimes (\mbQ \wedge \mbX) .
\]

\end{example}

\begin{example}[Distributions $\mbD_{F(q)}$]\label{example:Fq} The conformal structures induced by the Monge normal form distributions $\mbD_{F(q)}$ were essentially observed to be almost Einstein in \cite{Nurowski}. We can recover this result quickly: For a function $F(q)$, substituting in \eqref{equation:DQX} and \eqref{equation:DXX} gives $\Theta_0(\sigma)(\mbQ, \mbX) = \frac{1}{2} (\mbQ \mbX + \mbX \mbQ) \ul\sigma (\mbQ \wedge \mbX)$ and $\Theta_0(\sigma)(\mbX, \mbX) = \mbX^2 \ul\sigma (\mbQ \wedge \mbX)$. So, for the ansatz $\sigma = \sigma(q)$, the only nonzero component of $\Theta_0(\sigma) = 0$ is the equation $\Theta_0(\sigma)(\mbQ, \mbQ) = 0$, which we may regard as a second-order o.d.e. in $\ul\sigma(q)$:
\[
	\ul\sigma_{qq} - \frac{1}{10} \left(-\frac{3 F_{qqqq}}{F_{qq}} + \frac{4 F_{qqq}^2}{F_{qq}^2} \right) \ul\sigma = 0 .
\]
Thus, $\mbD_{F(q)}$ admits (at least) two linearly independent almost Einstein scales $\ul\sigma \mbQ \wedge \mbX$.

There is a fourth-order differential operator $\Xi$ such that the harmonic curvature of $\mbD_{F(q)}$ is $A = \Xi[\mbQ^2 F] \eta_{\mbQ}^4$. In particular, the root type of $\mbD_{F(q)}$ at $(x, y, p, q, z)$ is $\infty$ and $[4]$ respectively where $\Xi[\mbQ^2 F]$ is zero and nonzero. If $\mbD_{F(q)}$ is not flat, so that the root type is not $\infty$ everywhere, then the holonomy of the tractor connection $\nabla^{\mcV}$ is isomorphic to the Heisenberg $5$-group \cite{Willse}.

\end{example}

\begin{example}[A distribution of root type $\mathrm{[3, 1]}$]\label{example:31}
We construct an example by regarding the BGG equation $\Theta_0(\sigma) = 0$ as a system in the pair $(F, \sigma)$, choosing a manageable ansatz for $F$, and analyzing the resulting system enough to produce an explicit solution. In particular, the example we produce will have constant root type $[3, 1]$; to the knowledge of the authors no example of an almost Einstein $(2, 3, 5)$ distribution with root type $[3, 1]$ at any point is reported in the literature.

Substituting the ansatz $F(x, y, p, q, z) = q^2 + f(x, y, p, z)$ in the equation $\Theta_0(\sigma)(\mbQ, \mbQ) = 0$ gives that for any solution $\sigma = \ul\sigma \mbQ \wedge \mbX$, $\ul\sigma$ is affine in $q$, and substituting into $\Theta_0(\sigma)(\mbQ, \mbX) = 0$ and comparing terms in $q$ and then doing the same for $\Theta_0(\sigma)(\mbX, \mbX) = 0$ gives that $\ul\sigma$ is also affine in $z$. Now requiring that $\sigma$ is nonzero imposes strong restrictions on solutions $(F, \sigma)$, including that $f$ must also be affine in $z$, which turns out to imply that the harmonic curvature $A$ of $\mbD_F$ has a triple root at each point, or equivalently that the root type of $\mbD_F$ at each point is one of $\infty, [4], [3, 1]$. By further specializing the ansatz, one can find many solutions $(F, \sigma)$ of the system with $\sigma \neq 0$. For concreteness, we give a relatively simple example:
\[
	F^*(x, y, p, q, z) = q^2 - \frac{p^4}{(x + y)^2} - \frac{14 p^3}{3 (x + y)^2} + \frac{2 p z}{x + y}, \qquad \sigma^*(x, y, p, q, z) = (x + y) \mbQ \wedge \mbX .
\]
Computing gives that $A(\mbQ, \mbQ, \,\cdot\, , \,\cdot\,) = 0$ and that $A(\mbQ, \mbX, \mbX, \mbX)$ vanishes nowhere, from which we can conclude that $\mbD_{F^*}$ has constant root type $[3, 1]$.

The solution $\sigma^*$ spans $\ker \Theta_0$, that is, all of the Einstein representatives of the conformal class induced by $\mbD_{F^*}$ (in fact, they are Ricci-flat) are homothetic. Equivalently, the only parallel sections of the standard tractor bundle $\mcV$ are the multiples of $L_0(\sigma^*)$, but $\nabla^{\mcV}$ also preserves an isotropic rank-$2$ bundle $E \supset L_0(\sigma^*)$ whose fibers $E_u$ are special in the sense that the restrictions thereto of the parallel tractor cross product defined by the $\G_2$-structure are zero. All of the parallel tractor objects can be derived from these (and the parallel $\G_2$-structure) in the sense that the holonomy group $\Hol_u(\nabla^{\mcV})$, $u \in M$, is exactly the stabiliser in $\G_2$ of the data $\{L_0(\sigma^*)_u, E_u\}$, which turns out to be a codimension-$1$ (and so $7$-dimensional) subgroup of a Borel subgroup of $\G_2$.
\end{example}

\begin{example}[A special rolling distribution]\label{example:rolling-distribution}
Given a pair of oriented surfaces $(\Sigma_a, g_a)$, $a = 1, 2$, the space $C$ of all oriented configurations of the two surfaces with a tangent point of contact is the total space of a circle bundle $C \to \Sigma_1 \times \Sigma_2$. A base point $v := (v_1, v_2)$ specifies the points of contact on $\Sigma_1, \Sigma_2$, and the fiber over $v$ consists of the oriented isometries $T_{v_1} \Sigma_1 \to T_{v_2} \Sigma_2$ and so encodes the possible relative angular positions of the surfaces around the point of tangency. A path in this configuration space represents a relative motion of the two surfaces, and the so-called no-slip, no-twist conditions impose three linearly independent linear conditions on each tangent space $T_u C$ and hence define a $2$-plane distribution $\mbD$ on $C$. If the images of the curvatures of $g_1, g_2$ have empty intersection, $\mbD$ is a $(2, 3, 5)$ distribution \cite{AnNurowski, BryantHsu}.

Consider the case in which the two surfaces are a sphere $S$ and a hyperbolic plane $H$, scaled so that the sum of the (constant) curvatures of the two spaces is zero. It was shown in \cite{SagerschnigWillse} that the resulting $(2, 3, 5)$ distribution $\mbD$ has constant root type $[2, 2]$ and is almost Einstein: The space of almost Einstein scales is $1$-dimensional and any (equivalently every) nonzero solution vanishes nowhere, that is, there is a single Einstein representative of $\mbc_{\mbD}$ up to homothety. Computing shows that it has negative scalar curvature and that the holonomy of $\nabla^{\mcV}$ is $\SU(1, 2)$.

We can quickly recover the first result here: If we take the metrics on the sphere and hyperbolic space respectively to be
\[
	g_S = d\zeta^2 + \sin^2 \zeta \,d\theta^2
		\qquad \textrm{and} \qquad
	g_H = dt^2 + \sinh^2 t \,d\beta^2 ,
\]
then following the computational procedure of \cite{AnNurowski} using the orthonormal coframes $(\partial_{\zeta}, \csc \zeta \,\partial_{\theta})$ and $(\partial_t, \csch t \,\partial_{\beta})$ yields a frame $(\xi, \eta)$ of $\mbD$, where
\begin{align*}
	\xi &:= \partial_{\zeta} + \cos \phi \,\partial_t + \csch t \sin \phi \,\partial_{\beta} - \coth t \sin \phi \, \partial_{\phi}, \\
	\eta &:= \csc \zeta \,\partial_{\theta} - \sin \phi \,\partial_t + \csch t \cos \phi \, \partial_{\beta} + (-\coth t \cos \phi + \cot \zeta) \partial_{\phi} ;
\end{align*}
here, $\phi$ is a standard fiber coordinate on $C \to S \times H$. Computing gives that in the scale $\xi \wedge \eta$ the lowest-homogeneity component $\sfP_{\alpha\beta}$ of the Rho tensor vanishes, so Corollary \ref{corollary:235-characterization} gives that $\xi \wedge \eta$ is itself an almost Einstein scale. 

The flat model $(\G_2 / P, \Delta)$ discussed in Example \ref{example:flat-model} famously can be realised as the rolling distribution for a pair of spheres with ratio $3 : 1$ of radii \cite{BorMontgomery, Zelenko}. By treating the ratio of curvatures as a parameter and analyzing $\Theta_0$ as a function of that parameter, one can show that, among the rolling distributions for pairs of constant scalar curvature (Riemannian) surfaces, the only ones that admit almost Einstein scales are (up to local equivalence) the flat model and $(C, \mbD)$, but we take up a proof of this elsewhere.
\end{example}

\subsection{Application: Conformal structures of holonomy $\G_2$}
If a normal conformal tractor connection $\nabla^{\mcV}$ of an oriented $(2, 3, 5)$ conformal structure $\mbc_{\mbD}$ has holonomy a proper subgroup of $\G_2$, then by \cite[Proof of Theorem 1]{LeistnerNurowski} it admits (1) a nonzero parallel standard tractor or (2) a parallel isotropic tractor $2$-plane field whose fibers are special in the sense described in Example \ref{example:31}. Condition (2) implies \cite[Theorem 1.2]{GrahamWillse} that the harmonic curvature $A$ of $\mbD$ has a multiple root at each point, which gives an efficient way, recorded in the following proposition, to use the operator $\Theta_0$ to show that particular $(2, 3, 5)$ conformal structures have holonomy equal to $\G_2$.

\begin{proposition}\label{proposition:holonomy-G2}
Let $(M, \mbD)$ be an oriented $(2, 3, 5)$ distribution such that (1) $\ker \Theta_0$ is trivial, and (2) the root type of $\mbD$ at some point in $M$ is $[1, 1, 1, 1]$. Then, the holonomy of the normal tractor connection $\nabla^{\mcV}$, equivalently the normal conformal holonomy of $\mbc_{\mbD}$, is isomorphic to the full group $\G_2$.
\end{proposition}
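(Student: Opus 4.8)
The plan is to argue by contradiction, using the dichotomy for proper subgroups of $\G_2$ recalled immediately before the statement. Because $(M, \mbD)$ is a $(2,3,5)$ distribution, the normal conformal holonomy $\Hol(\nabla^{\mcV})$ is automatically contained in $\G_2$, so it suffices to exclude the possibility that it is a \emph{proper} subgroup. Suppose it is. Then by \cite[Proof of Theorem 1]{LeistnerNurowski} at least one of the following holds: (1) $\nabla^{\mcV}$ admits a nonzero parallel standard tractor, or (2) $\nabla^{\mcV}$ admits a parallel isotropic tractor $2$-plane field whose fibers are special (in the sense of Example \ref{example:31}). I would then rule out each alternative using one of the two hypotheses.

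To eliminate alternative (1), I would invoke the bijective BGG correspondence between $\nabla^{\mcV}$-parallel sections of $\mcV$ and solutions of $\Theta_0$, realised by $L_0$ and $\Pi_0$: a parallel $t \in \Gamma(\mcV)$ satisfies $\partial^* \nabla^{\mcV} t = 0$, so by the uniqueness in the defining characterization of the splitting operator it equals $L_0(\Pi_0(t))$, whence $\sigma := \Pi_0(t) \in \ker \Theta_0$ and $t = L_0(\sigma)$. In particular $t \neq 0$ forces $\sigma \neq 0$. Thus alternative (1) would produce a nonzero element of $\ker \Theta_0$, contradicting hypothesis (1). (Equivalently, by Theorem \ref{theorem:geometric-characterization} this $\sigma$ would be a nonzero almost Einstein scale of $\mbc_{\mbD}$.)

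To eliminate alternative (2), I would apply \cite[Theorem 1.2]{GrahamWillse}: the existence of a parallel isotropic special $2$-plane field implies that the harmonic curvature $A$ has a multiple root at every point of $M$. But hypothesis (2) says that at some $u \in M$ the root type of $\mbD$ is $[1,1,1,1]$, meaning $A_u \otimes \bbC$ has four \emph{distinct} roots in $\bbP(\mbD_u \otimes \bbC)$ and hence no multiple root---a contradiction. With both alternatives excluded, the assumption $\Hol(\nabla^{\mcV}) \subsetneq \G_2$ is untenable, so $\Hol(\nabla^{\mcV}) = \G_2$, as claimed.

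Since the argument is essentially an assembly of the cited results, I anticipate no computational difficulty; the only step needing genuine care is the claim in alternative (1) that a nonzero parallel tractor cannot project to the zero scale. This rests on the fact that the BGG splitting operator recovers a parallel section from its projection, which follows from the uniqueness in the characterization of $L_0$ together with the trivial implication $\nabla^{\mcV} t = 0 \Rightarrow \partial^* \nabla^{\mcV} t = 0$.
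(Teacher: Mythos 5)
Your proposal is correct and follows essentially the same route as the paper, whose proof is contained in the paragraph preceding the proposition: containment of the holonomy in $\G_2$, the Leistner--Nurowski dichotomy for proper subgroups, exclusion of a nonzero parallel standard tractor via the BGG correspondence and hypothesis (1), and exclusion of the parallel special $2$-plane field via \cite[Theorem 1.2]{GrahamWillse} and hypothesis (2). Your careful justification that a nonzero parallel tractor projects to a nonzero element of $\ker \Theta_0$ (via the uniqueness characterizing $L_0$) is a correct filling-in of a step the paper leaves implicit.
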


\begin{example}[A nonexample: A homogeneous conformal structure of holonomy $\G_2$]
As an example, consider the $5$-dimensional Lie algebra $\mfg$ with bracket
\[
	\begin{array}{c|ccccc}
		[\,\cdot\, , \,\cdot\,]    & e_1 & e_2 & e_3 & e_4 & e_5 \\
	\hline
		e_1 & \cdot & 3 e_2 & 2 e_3 & e_4 & e_4 + e_5 \\
		e_2 & & \cdot & \cdot & \cdot & \cdot \\
		e_3 & & & \cdot & \cdot & e_2 \\
		e_4 & & & & \cdot & e_3 \\
		e_5 & & & & & \cdot
	\end{array} \quad .
\]

For any Lie group $G$ with Lie algebra $\mfg$, consider the left-invariant distribution $\mbD \subset TG$ characterised via the identification $T_{\id} G \cong \mfg$ by $\mbD_{\id} = \langle e_1 + e_3, e_5 \rangle$. Computing directly shows that $\mbD$ is a $(2, 3, 5)$ distribution and that $\mbD$ has constant root type $[1, 1, 1, 1]$. It can also be checked that $\mbD$ can be realised locally in Monge normal form $\mbD_F$ for $F(x, y, p, q, z) = y + \exp q$ \cite{DoubrovGovorovUnpublished}, and we use this form to analyze the system $\Theta_0(\sigma) = 0$.

For this $F$, $\Theta_0(\sigma)(\mbQ, \mbQ) = (\ul\sigma_{q, q} - \smash{\frac{1}{10} \ul\sigma}) \mbQ \wedge \mbX$, so for any solution $\sigma = \ul\sigma \mbQ \wedge \mbX$, $\ul \sigma$ has the form $\smash{\lambda(x, y, p, z) \exp\left(q / \sqrt{10}\right)} + \smash{\mu(x, y, p, z) \exp\left(- q / \sqrt{10}\right)}$. Substituting this expression for $\sigma$ in the combination $(\partial_q^2 - \frac{1}{10}) \Theta_0(\sigma)(\mbQ, \mbX)$ and comparing like coefficients in $q$ shows that neither $\lambda$ nor $\mu$ depend on either $p$ or $z$, then substituting again in $\Theta_0(\sigma)(\mbQ, \mbX)$ and comparing like terms in $p, z$ shows that $\lambda$ and $\mu$ must be constant. Finally, substituting again for $\sigma$ in $\Theta_0(\sigma)(\mbX, \mbX)$ shows that $\sigma = 0$, so $\ker \Theta_0$ is trivial, that is, $\mbD$ is not almost Einstein. By Proposition \ref{proposition:holonomy-G2}, the holonomy of $\nabla^{\mcV}$, or equivalently the holonomy of the induced conformal structure $\mbc_{\mbD}$, is $\G_2$.
\end{example}

\appendix
\section{}
\label{algebra}
We use the following matrix representation of the Lie algebra $\mfg_2$ of $\G_2$:
\[
    \mfg_2 =
        \left\{
           \left(
                \begin{array}{ccccccc}
                    -\tr A & Z & s & W^{\top} & 0 \\
                        X & A - (\tr A) \bbI & \sqrt{2} \bbJ Z^{\top} & \frac{1}{\sqrt{2}} s \bbJ & -W \\
                        r & -\sqrt{2} X^{\top} \bbJ & 0 & -\sqrt{2} Z \bbJ & s \\
                      Y^{\top} & -\frac{1}{\sqrt{2}} r \bbJ & \sqrt{2} \bbJ X & (\tr A) \bbI - A^{\top} & -Z^{\top} \\
                        0 & -Y & r & -X^{\top} & \tr A   \\
                \end{array}
            \right)
            :
            \begin{array}{c}
           		X, W \in \bbR^2      ; \\
				r, s \in \bbR        ; \\
				Y, Z \in (\bbR^2)^*  ; \\
				A    \in \mfgl(2, \bbR)
           \end{array}
        \right\} \textrm{,}
\]
where
\[
	\bbI =
		\begin{pmatrix}
			1 & 0 \\
			0 & 1
		\end{pmatrix},
		\qquad
	\bbJ =
		\begin{pmatrix}
			0 & -1 \\
			1 &  0
		\end{pmatrix}.
\]

The subgroup $P$ stabilizing the ray spanned by the first basis element of $\mathbb{V}$ is parabolic.
The $|3|$-grading $(\mfg_a)$ of $\mfg_2$ is schematised by the block decomposition labeling
\begin{equation}\label{grading}
    \left(
        \begin{array}{ccccc}
            \mfg_0    & \mfg_{+1} & \mfg_{+2} & \mfg_{+3} &    0      \\
            \mfg_{-1} & \mfg_0    & \mfg_{+1} & \mfg_{+2} & \mfg_{+3} \\
            \mfg_{-2} & \mfg_{-1} &    0      & \mfg_{+1} & \mfg_{+2} \\
            \mfg_{-3} & \mfg_{-2} & \mfg_{-1} & \mfg_0    & \mfg_{+1} \\
               0      & \mfg_{-3} & \mfg_{-2} & \mfg_{-1} & \mfg_0    \\
        \end{array}
    \right) \textrm{.}
\end{equation}
The Lie algebra of $P$ is the direct sum $\mathfrak{p} = \mathfrak{g}_0 \oplus \mathfrak{g}_{+1} \oplus \mathfrak{g}_{+2} \oplus \mathfrak{g}_{+3}$. As usual, we denote $\mfg_- := \mfg_{-3} \oplus \mfg_{-2} \oplus \mfg_{-1}$ and $\mfg _+ := \mfg_{+1} \oplus \mfg_{+2} \oplus \mfg_{+3}$.

The Lie bracket component $\mfg_{-1}\times\mfg_{-1}\to\mfg_{-2}$ inducing the Levi bracket component $\mcL : \smash{\textstyle \bigwedge^2 \mbD} \to [\mbD, \mbD] / \mbD$ is given by $(X,X')\mapsto \smash{2\sqrt{2}\epsilon_{\alpha \beta} X^{\alpha} (X')^{\beta}}$, and the component $\mfg_{-1} \times \mfg_{-2} \to \mfg_{-3}$ inducing the component $\mcL : \mbD \otimes ([\mbD, \mbD] / \mbD) \to TM / [\mbD, \mbD]$ is given by $(X,r') \mapsto \smash{\tfrac{3}{\sqrt{2}}r'\epsilon_{\beta\alpha}X^{\beta}}$. Here, $\eps_{\alpha \beta}$ (and later, $\eps^{\alpha \beta}$) is the Levi-Civita symbol on $\Bbb R^2$.

We use $\tfrac{1}{24}K$, where $K$ is the Killing form, to identify $\mathfrak{g}_{i}\cong(\mathfrak{g}_{-i})^*$; the dualities are: 
\[
    \begin{array}{c|ccl}
        \mfg_{-3} \times \mfg_{+3} & (Y, W ) & \mapsto & \tfrac{1}{3} Y_{\alpha} W^{\alpha} \\
        \mfg_{-2} \times \mfg_{+2} & (r, s ) & \mapsto & \tfrac{1}{2} r s   \\
        \mfg_{-1} \times \mfg_{+1} & (X, Z ) & \mapsto & Z_{\alpha} X^{\alpha} \\
        \mfg_{ 0} \times \mfg_{ 0} & (A, A') & \mapsto & \frac{1}{3} [A^{\alpha}{}_{\beta} (A')^{\beta}{}_{\alpha} + A^{\alpha}{}_{\alpha} (A')^{\beta}{}_{\beta}]
    \end{array}
    \textrm{.}
\]

When restricted to $\GL(2, \bbR) < P$, the standard representation $\mathbb{V}$ decomposes as
\[
	\bbV
		=
			\left(
				\begin{array}{c}
					\bbV_{ 2} \\
					\bbV_{ 1} \\
					\bbV_{ 0} \\
					\bbV_{-1} \\
					\bbV_{-2}
				\end{array}
			\right)
		\cong
			\left(
				\begin{array}{c}
					\smash{\bigwedge^2 (\bbR^2)^*} \\
					                   (\bbR^2)^*  \\
					                    \bbR       \\
					                    \bbR^2     \\
					\smash{\bigwedge^2  \bbR^2   }
				\end{array}
			\right) ,
\]
and we write a generic element of $\bbV$ as
\[
    \left(
        \begin{array}{c}
            \chi \\
            \phi \\
            \upsilon \\
            \tau^{\top} \\
            \sigma \\
        \end{array}
    \right) \textrm{.}
   \]
The notation $\cdot^{\top}$ reflects that we view $\tau \in \bbR^2$ as an element of $\smash{\textstyle (\bbR^2)^* \otimes \bigwedge^2 \bbR^2}$ and thus as a row vector (and so decorate it with a lower index), and so view $\tau^{\top}$ as a column vector. Likewise, we view $\phi \in (\bbR^2)^*$ as an element of $\smash{\textstyle \bbR^2 \otimes \bigwedge^2 (\bbR^2)^*}$ and thus as a column vector (and so mark it with an upper index).

The action $\mfg_2 \times \bbV \to \bbV$ decomposes into the following maps $\mfg_a \times \bbV_b \to \bbV_{a + b}$:
\begin{equation}\label{table}
	\begin{array}{c|c|ccccc}
		&   & \bbV_{-2} & \bbV_{-1} & \bbV_0 & \bbV_1 & \bbV_2 \\
		\hline
		&   & \sigma & \tau & \upsilon & \phi & \chi \\
		\hline
		\mfg_{-3} & Y
			& 0
			& 0
			& 0
			& - Y_{\alpha} \phi^{\alpha}
			& \chi Y_{\alpha} \\
		\mfg_{-2} & r
			& 0
			& 0
			& r \upsilon
			& -\frac{1}{\sqrt{2}} r \eps_{\beta\alpha} \phi^{\beta}
			& r \chi \\
		\mfg_{-1} & X
			& 0
			& -\tau_{\alpha} X^{\alpha}
			& \sqrt{2} \upsilon \eps_{\beta\alpha} X^{\beta}
			& \sqrt{2} \eps_{\alpha\beta} X^{\alpha} \phi^{\beta}
			& \chi X^{\alpha} \\
		\mfg_0    & A
			& A^{\alpha}{}_{\alpha} \sigma
			& - \tau_{\beta} A^{\beta}{}_{\alpha} + A^{\beta}{}_{\beta} \tau_{\alpha}
			& 0
			& A^{\alpha}{}_{\beta} \phi^{\beta} - A^{\beta}{}_{\beta} \phi^{\alpha}
			& -A^{\alpha}{}_{\alpha} \chi \\
		\mfg_{+1} & Z
			& - \sigma Z_{\alpha}
			& \sqrt{2} \eps^{\alpha\beta} Z_{\alpha} \tau_{\beta}
			& \sqrt{2} \upsilon \eps^{\beta \alpha} Z_{\beta}
			& Z_{\alpha} \phi^{\alpha}
			& 0 \\
		\mfg_{+2} & s
			& s \sigma
			& \frac{1}{\sqrt{2}} s \epsilon^{\beta \alpha} \tau_{\beta}
			& s \upsilon
			& 0
			& 0 \\
		\mfg_{+3} & W
			& - \sigma W^{\alpha}
			& \tau_{\alpha} W^{\alpha}
			& 0
			& 0
			& 0 \\
    \end{array} \textrm{.}
\end{equation}

\end{document}